\documentclass[letterpaper, 10 pt, conference]{ieeeconf}  

\IEEEoverridecommandlockouts                              

\usepackage[hidelinks=true,colorlinks=true,citecolor=blue,linkcolor=blue,urlcolor=blue]{hyperref}       
\usepackage{url}            
\usepackage{booktabs}       
\usepackage{amsfonts}       
\usepackage{nicefrac}       
\usepackage{microtype}      
\usepackage{xcolor}         

\usepackage{algorithm}
\usepackage{algorithmic}

\usepackage{amsmath,amssymb}
\usepackage{bm}
\usepackage{graphicx}
\usepackage{ascmac}
\usepackage{here}
\usepackage{cite}
\usepackage{color}
\usepackage{graphicx}
\usepackage{mathrsfs}
\usepackage{subcaption}
\usepackage{wrapfig}
\usepackage{tikz}
\usetikzlibrary{positioning}

\usepackage{tabularx}
\usepackage{booktabs}

\newcommand{\calB}{{\mathcal B}}    
\newcommand{\calC}{{\mathcal C}}    
    
\newcommand{\calE}{{\mathcal E}}    
\newcommand{\calF}{{\mathcal F}}

\newcommand{\calN}{{\mathcal N}}    
    
\newcommand{\calP}{{\mathcal P}}

\newcommand{\bbE}{{\mathbb E}}

\newcommand{\bbP}{{\mathbb P}}    
    
\newcommand{\bbR}{{\mathbb R}}

\newcommand{\bbU}{{\mathbb U}}

\newcommand{\rmd}{{\rm d}}

\newcommand{\dimx}{n_x}
\newcommand{\dimu}{n_u}

\DeclareMathOperator*{\argmax}{arg\,max}
\DeclareMathOperator*{\argmin}{arg\,min}

\newcommand{\diag}{\mathop{\rm diag}\nolimits}

\newcommand{\trace}{{\rm tr}}

\newcommand{\what}[1]{\widehat{#1}}

\newtheorem{theorem}{\bf Theorem}
\newtheorem{proposition}[theorem]{\bf Proposition}
\newtheorem{lemma}[theorem]{\bf Lemma}
\newtheorem{corollary}[theorem]{\bf Corollary}

\newtheorem{definition}[theorem]{\bf Definition}
\newtheorem{assumption}[theorem]{\bf Assumption}

\title{\LARGE \bf
Optimal Centered Active Excitation in Linear System Identification
}

\author{Kaito Ito and Alexandre Proutiere
\thanks{This work was supported in part by JSPS KAKENHI under Grant Number JP24K17297; in part by JST, ASPIRE under Grant Number JPMJAP2402; and in part by the Wallenberg AI, Autonomous Systems and Software Program (WASP) funded
by the Knut and Alice Wallenberg Foundation, the Swedish
Research Council (VR), and Digital Futures.}
\thanks{Kaito Ito is with the Department of Information Physics and Computing, The University of Tokyo, Tokyo 113-8654, Japan {\tt \small kaito@g.ecc.u-tokyo.ac.jp}}%
\thanks{Alexandre Proutiere is with the Division of Decision and Control Systems, School of Electrical Engineering and Computer Science, KTH Royal Institute of Technology, Stockholm 114~28, Sweden {\tt\small alepro@kth.se}}%
}

\begin{document}

\maketitle
\thispagestyle{empty}
\pagestyle{empty}

\begin{abstract}

We propose an active learning algorithm for linear system identification with optimal centered noise excitation. Notably, our algorithm, based on ordinary least squares and semidefinite programming, attains the minimal sample complexity while allowing for efficient computation of an estimate of a system matrix. More specifically, we first establish lower bounds of the sample complexity for any active learning algorithm to attain the prescribed accuracy and confidence levels. Next, we derive a sample complexity upper bound of the proposed algorithm, which matches the lower bound for any algorithm up to universal factors. 
Our tight bounds are easy to interpret and explicitly show their dependence on the system parameters such as the state dimension.

\end{abstract}

\section{Introduction}

System identification refers to a fundamental class of problems focused on designing procedures to estimate the unknown parameters of dynamical systems. It has a rich history within the control and learning communities. Most research has concentrated on linear systems, a crucial model class that arises in diverse fields such as finance, biology, robotics, and various engineering applications \cite{Ljung:1986:SIT:21413}. The state dynamics for such a system are: for a given initial state $x_0\in \bbR^{n_x}$, 
\begin{equation}\label{eq:lti} 
\forall t\ge 0, \quad x_{t+1} = Ax_t + Bu_t + w_t ,
\end{equation}
where the system matrix $A \in \mathbb{R}^{n_x \times n_x}$ is unknown and to be estimated, $u_t$ denotes the known input or excitation at time $t$, $B$ is the known control matrix, and $w_t$ represents the process noise. The central goal of system identification is to estimate $A$ from observations, typically by analyzing a trajectory of the system's successive states. 

Early foundational works focused on asymptotic analysis under passive excitation, where the input sequence $\{u_t\}_{t \ge 0}$ is fixed in advance and not adaptively tuned; see for instance \cite{Goodwin:1977:DSI,Ljung:c1,Ljung:c2}. While there has been longstanding interest in active excitation, designing the input to optimize estimation \cite{Mehra1976, Goodwin:1977:DSI, Bombois2011}, these efforts lack  finite-time performance guarantees. More recently, advances in random matrix theory \cite{mendelson:2014, vershynin:2012} and self-normalized processes \cite{pena2008self} have enabled a deeper understanding of the convergence rates and sample complexities of classical estimators, particularly the Least-Squares Estimator (LSE), for both observable \cite{rantzer2018, faradonbeh:2018:c2, simchowitz2018learning, oymak2019non, sarkar2019near, Jedra2020, sarkar2021finite, Jedra2023} and unobservable systems \cite{oymak2019non, sarkar2021finite}. However, these results are largely limited to passive excitation. 

Active excitation with finite-time statistical guarantees has been studied in \cite{Wagenmaker2020}, which presents a lower bound on the sample complexity for periodic excitation and proposes an algorithm that approaches this bound. Nevertheless, this lower bound fails to capture the correct dependence on the state dimension, and the algorithm relies on solving a non-convex program, raising concerns about feasibility and computational tractability. 
Such a computational issue persists in \cite{Chatzikiriakos2025high}, which focuses on identification from a finite set of system matrices. While \cite{Chatzikiriakos2025hidden} introduces a convex reformulation of active excitation for ARX systems and establishes a sample complexity upper bound, the optimality of the proposed method remains unaddressed due to the absence of corresponding sample complexity lower bounds.

In this work, we address these limitations under a natural and practically relevant assumption: the excitation process $\{u_t\}$ is a centered sub-Gaussian process, as is common in control applications due to their simplicity and ease of implementation~\cite{Ljung:1986:SIT:21413}. 
The main contributions are summarized as follows.
We derive sample complexity lower bounds satisfied by any algorithm using active excitation and achieving prescribed accuracy and confidence levels, parameterized by $(\varepsilon,\delta) $. Notably, the lower bounds in Theorem~\ref{thm:dim_lower_bound} and Corollary~\ref{Cor:lower_bound_infty_dim} exhibit a tight dependence on the state dimension. These bounds suggest that excitation inputs which maximize the minimum eigenvalue of the finite-time reachability (or controllability) Gramian can lead to algorithms with minimal sample complexity. 
Motivated by this, we propose an active learning algorithm based on the LSE. Since the system matrix $A$ is initially unknown and cannot be directly used to optimize the input covariance for active excitation, our method adaptively updates the input covariance using an estimate of $A$ obtained after a carefully designed initial phase. The resulting optimization problem can then be efficiently solved using semidefinite programming (SDP) solvers.
We establish a sample complexity upper bound for the proposed algorithm (Theorem~\ref{thm:upper_bound}) under the assumption that resetting the state is possible. When $\varepsilon$ and $\delta$ are small, this upper bound matches the tight lower bound in Corollary~\ref{Cor:lower_bound_infty_dim}, up to a multiplicative factor independent of $(\varepsilon, \delta)$ and the system matrix $A$.

\paragraph*{Related work}

While noise input has been commonly used for system excitation in the system identification literature~\cite{Ljung:1986:SIT:21413}, theoretical understanding of its optimal design with finite-time performance guarantees remains limited. To our knowledge, this work is the first to establish a sample complexity guarantee for optimal noise excitation. Previous works such as \cite{sarkar2019near, simchowitz2018learning} derived sample complexity upper bounds for the LSE under passive excitation and mentioned that these bounds could be extended to settings involving noise input. However, these studies do not address the optimization of the noise input for efficient system excitation. 
Furthermore, the dependence of the bounds in \cite{sarkar2019near} on the state dimension is implicit, with the dependence on $A$ hidden in constants, while the bounds in \cite{simchowitz2018learning} are difficult to interpret. The recent work \cite{Jedra2023} provides a more interpretable sample complexity upper bound for the LSE under passive excitation only, explicitly characterizing its dependence on both $A$ and the state dimension. Our sample complexity analysis is inspired by their approach.

\paragraph*{Notation}
The identity matrix of appropriate dimension is denoted by $ I $. For a symmetric matrix $ S $, $ S \succeq 0 $ means that $ S $ is positive semidefinite. The spectral norm and the Frobenius norm for matrices are denoted by $ \| \cdot \| $ and $ {\| \cdot \|_{\rm F}} $, respectively. For a vector, $ \| \cdot \| $ denotes the Euclidean norm. The spectral radius of a square matrix $ A $ is denoted by $ \rho(A) $.
The minimum eigenvalue of $ A $ is denoted by $ \lambda_{\min} (A) $.
The expectation is denoted by $ \bbE $. The multivariate Gaussian distribution with mean $ \mu $ and covariance $ \Sigma $ is denoted by $ \calN(\mu,\Sigma) $.

\section{Sample Complexity Lower Bounds under Active Excitation}

In this section, we first present the system identification problem setting and then derive lower bounds on the sample complexity required to estimate $A$ with specified accuracy and confidence levels.

\subsection{Problem Setting}
We consider a discrete-time linear system \eqref{eq:lti}, where $ x_t \in \bbR^{\dimx} $ and $ u_t \in \bbR^{\dimu} $. The process noise $ \{w_t\}_{t\ge 0} $ is i.i.d., with $ \bbE[w_t] = 0 $ and $ \bbE[w_t w_t^\top] = I $.
Without loss of generality, we assume that the initial state is given by $ x_0 = 0 $.
We wish to estimate $ A \in \bbR^{\dimx \times \dimx}$ based on the observation of a sequence $ \{x_0,\ldots,x_t,u_0,\ldots,u_{t-1}\} $. In this section, we assume that this sequence is obtained by a single trajectory without state resets. Under this setting, we consider algorithms with adaptive excitation. Such an algorithm consists in (i) a control policy (or a sampling rule) defining, in each step $t$, the conditional distribution of $u_t$ given $ \calF_t $, where $ \calF_t $ is the $ \sigma $-algebra generated by $ \{x_0,\ldots,x_t,u_0,\ldots,u_{t-1}\} $; (ii) 
a $ \calF_t $-measurable estimate $ \what{A}_t $ of $ A $. To derive problem-specific bounds of the sample complexity, we need to consider a class of algorithms that adapt to the matrix $A$. Indeed, an algorithm always returning $A$ would not need any sample if the true matrix is $A$, but would fail for any other matrix. We consider the class of locally stable algorithms as defined below. 
\begin{definition}[$ (\varepsilon,\delta) $-locally stable algorithm~\cite{Jedra2023}]
    For $ \varepsilon > 0 $ and $ \delta \in (0,1) $, an algorithm returning an estimate $ \what{A}_t $ of $ A $ at time $ t $, is said to be $ (\varepsilon,\delta) $-locally stable in $ A $ if there exists a finite deterministic time $ \tau $ such that for any $ t \ge \tau $,
    \begin{equation}\label{def:local_stability}
        \inf_{A' \in \calB(A,6\sqrt{2}\varepsilon)} \bbP_{A'} \! \left( \|\what{A}_t - A' \| \le \varepsilon \right) \ge 1- \delta,
    \end{equation}
    where $ \calB (A,6\sqrt{2}\varepsilon) := \left\{ A'\in \bbR^{\dimx \times \dimx} : \| A - A' \| \le 6\sqrt{2} \varepsilon \right\} $.
    \hfill $ \diamondsuit $
\end{definition}

The choice of the constant $ 6\sqrt{2} $ is made solely for technical convenience in the subsequent analysis and does not play any essential role.
The sample complexity $ \tau_{A} $ of an $ (\varepsilon,\delta) $-locally stable algorithm is defined as the minimal number $ \tau $ such that \eqref{def:local_stability} holds for any $ t \ge \tau $.

In this paper, we restrict our attention to the class of algorithms with centered excitation. The control policy of an algorithm is defined by a sequence $ \{\pi_t \} $ of conditional densities such that for any Borel set $ \bbU \subseteq \bbR^{\dimu} $ and almost surely,
\begin{align}
      \bbP(u_t \in \bbU \mid \calF_{t}) = \int_{\bbU} \pi_t (u \mid x_1,\ldots,x_t,u_0,\ldots,u_{t-1}) \rmd u . \label{eq:density}
\end{align}
We say that the excitation is centered if the following assumption holds. 

\begin{assumption}\label{ass:independent_input}
    The control policy $ \{\pi_t\} $ satisfies $ \bbE[u_t | {\cal F}_{t}] = 0 $ and $ \bbE[ \|u_t\|^2 ] < \infty $ for any $ t\ge 0 $. 
    \hfill $ \diamondsuit $
\end{assumption}

Under the constraint $ \bbE[u_t | {\cal F}_{t}] = 0 $, the excitation input $ u_t $ can be adaptively tuned using $ \{x_0,\ldots,x_t,u_0,\ldots,u_{t-1}\} $, enabling more efficient excitation in a non-i.i.d. manner.
In the subsequent sections, we show that this class of active excitation leads to a computationally efficient optimal algorithm, in contrast to periodic excitation~\cite{Wagenmaker2020}.

\subsection{Sample Complexity Lower Bounds}\label{subsec:lower_bound}
We derive lower bounds of the sample complexity for {\em any} $ (\varepsilon,\delta) $-locally stable algorithms.
First, we provide a sample complexity lower bound that holds for any $ A $ and $ \varepsilon $.
For the derivation, we utilize a change-of-measure argument and the data processing inequality as in \cite{Jedra2023}, and the proof is given in Appendix~\ref{app:lower_bound}.

\begin{proposition}[Lower bound for any $ A $ and $ \varepsilon $]\label{prop:lower_bound_persis}
    Suppose that $ w_t \sim \calN(0,I) $ for any $ t \ge 0 $. 
   Then, for any matrix $ A $, for all $ \varepsilon > 0 $ and $ \delta \in (0,1) $, the sample complexity $ \tau_A $ of any $ (\varepsilon,\delta) $-locally stable algorithm in $ A $ with centered excitation satisfies
   \begin{equation}\label{eq:lower_bound_persis}
        \lambda_{\min} \! \left( \sum_{s=1}^{\tau_A-1} \Sigma_s   \right) \ge \frac{1}{2\varepsilon^2} \log \left(\frac{1}{3 \delta}\right) ,
   \end{equation}
   where $ \Sigma_s := \bbE[x_s x_s^\top] $.
   Moreover, $ \Sigma_s $ satisfies for any $ s \ge 0 $,
   \begin{align}
    \Sigma_{s+1} = A \Sigma_s A^\top + B U_s B^\top + I ,  \label{eq:evolution_sigma}
    \end{align}
    where $ U_s := \bbE[u_s u_s^\top] $. 
   \hfill $ \diamondsuit $
\end{proposition}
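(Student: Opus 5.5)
The plan is a change-of-measure argument in the style of \cite{Jedra2023}. We compare the true matrix $A$ with a perturbed matrix $A' = A + \Delta$, where $\|\Delta\|$ is about $2\varepsilon$ and lies inside $\calB(A,6\sqrt{2}\varepsilon)$. Since the algorithm is locally stable, for $t \ge \tau_A$ the event $E := \{\|\what{A}_t - A\| \le \varepsilon\}$ satisfies $\bbP_A(E) \ge 1-\delta$. If $\|\Delta\| > 2\varepsilon$, then $E$ is disjoint from $\{\|\what{A}_t - A'\|\le\varepsilon\}$, so $\bbP_{A'}(E) \le \delta$. The data processing inequality applied to the law of $(x_0,\ldots,x_t,u_0,\ldots,u_{t-1})$ gives
\begin{equation*}
\mathrm{KL}(\bbP_A \| \bbP_{A'}) \ge \mathrm{kl}(1-\delta,\delta) \ge \log\frac{1}{3\delta},
\end{equation*}
where $\mathrm{kl}$ is the binary relative entropy and the last bound is the standard estimate $\mathrm{kl}(1-\delta,\delta)\ge \log(1/(2.4\delta))$.

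Next we compute the KL divergence. The control densities $\pi_s$ are the same under both models, because the policy depends only on observed data. So in the chain rule the likelihood ratio contains only the transition terms. With Gaussian noise $w_s\sim\calN(0,I)$, the conditional KL at step $s$ is $\frac12\|(A-A')x_s\|^2$. Summing over $s$ and taking expectations under $\bbP_A$ gives
\begin{equation*}
\mathrm{KL} = \frac12 \sum_{s=0}^{t-1} \bbE_A\bigl[x_s^\top \Delta^\top \Delta x_s\bigr] = \frac12 \trace\Bigl(\Delta \sum_{s} \Sigma_s \Delta^\top\Bigr).
\end{equation*}
The $s=0$ term vanishes because $x_0 = 0$. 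We take $\Delta = c\, e v^\top$, with $v$ a unit eigenvector for $\lambda_{\min}(\sum_{s=1}^{t-1}\Sigma_s)$, $e$ any unit vector, and $c$ slightly larger than $2\varepsilon$. Then $\mathrm{KL} = \frac{c^2}{2}\lambda_{\min}(\cdot)$. Letting $c\downarrow 2\varepsilon$ yields $2\varepsilon^2 \lambda_{\min} \ge \log(1/(3\delta))$.

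This gives $1/(2\varepsilon^2)$ only after a careful choice of radius, so the constants need checking. The ball radius $6\sqrt2\varepsilon$ leaves room to take $c$ larger. A cleaner route is to compare two alternatives, $A \pm \Delta$ with $\|\Delta\|$ just above $\varepsilon$, instead of comparing $A$ against $A'$. The events $\{\|\what A_t - (A\pm\Delta)\|\le\varepsilon\}$ are disjoint, and the KL from $A$ to each alternative is $\frac{\|\Delta\|^2}{2}\lambda_{\min}$. Combining these two comparisons with a Bretagnolle–Huber or Le Cam style inequality should produce exactly $\frac{1}{2\varepsilon^2}\log\frac{1}{3\delta}$; the factor $3$ suggests this is the intended route. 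I expect pinning down the constant to be the main obstacle. The structural argument itself is routine. Applying the bound at $t=\tau_A$ gives the sum up to $\tau_A-1$.

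Finally, for \eqref{eq:evolution_sigma}, expand $x_{s+1}x_{s+1}^\top$ and take expectations. The key observation concerns the cross terms. $\bbE[x_s u_s^\top] = \bbE[x_s \bbE[u_s^\top|\calF_s]] = 0$ by Assumption~\ref{ass:independent_input}, since $x_s$ is $\calF_s$-measurable. Likewise $w_s$ is independent of $(x_s,u_s)$ with zero mean, so its cross terms vanish, and $\bbE[w_sw_s^\top]=I$. Finiteness of second moments follows inductively from $\bbE\|u_s\|^2<\infty$.
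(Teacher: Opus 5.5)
Your proof is correct and follows the paper's own argument: the data-processing bound on $E=\{\|\what{A}_t - A\|\le\varepsilon\}$ against the rank-one alternative $A' = A + c\,e v^\top$ along the minimal eigenvector gives $\tfrac{c^2}{2}\lambda_{\min} \ge \log\frac{1}{3\delta}$, letting $c\downarrow 2\varepsilon$ finishes it, and the tower-property computation handles the covariance recursion. Taking $c>2\varepsilon$ strictly and then passing to the limit is, if anything, more careful about disjointness than the paper's $\|A-A'\|\ge 2\varepsilon$. That first route already yields exactly $\frac{1}{2\varepsilon^2}\log\frac{1}{3\delta}$, so your worry about pinning down the constant is misplaced and the two-alternative detour is unnecessary.
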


Observe that the lower bound \eqref{eq:lower_bound_persis} depends on the excitation inputs $ \{u_s\} $ only through their covariances $ \{U_s\} $. This implies that for designing centered excitation inputs, it suffices to consider their covariances $ \{U_s\} $ instead of control policies.
Moreover, by using \eqref{eq:evolution_sigma}, we have
\begin{equation}
    \sum_{s=1}^{t -1} \Sigma_s = \sum_{s=1}^{t -1} \! \left( \Gamma_s(A) + \Xi_s (A, \{U_k\}_{k=0}^{s-1}) \right) , \nonumber
\end{equation}
where $ \Gamma_s(A) := \sum_{k=0}^{s-1} A^k (A^k)^\top $ and $ \Xi_s (A,\{U_k\}_{k=0}^{s-1}) := \sum_{k=0}^{s-1} A^{s-1-k} BU_k B^\top (A^{s-1-k})^\top $ are reachability Gramians. 
When $ U_k \equiv U $, we write $ \Xi_s(A,U) = \Xi_s (A,\{U_k\}_{k=0}^{s-1}) $.
By setting $ B = 0 $, we recover the sample complexity lower bound without active excitation~\cite[Theorem~1]{Jedra2023}.
It should be noted that the above result does not assume the Gaussianity of $ u_t $. 

If $ A $ is stable and $ \varepsilon > 0 $ is small, that is, $ \rho(A) <1 $ and high accuracy of the estimation is required, we can derive a tighter lower bound of the sample complexity, which shows its explicit dependence on the state dimension $ \dimx $.
To this end, we introduce the following power constraint on control policies.
\begin{assumption}[Power constraint]\label{ass:power}
   For system~\eqref{eq:lti} and $ \bar{u} \ge 0 $, the control policy $ \{\pi_t\} $ satisfies $ \bbE[ \|u_t \|^2] \le \bar{u} $ for any $ t \ge 0 $.
   \hfill $ \diamondsuit $
\end{assumption}

Define the infinite-time reachability Gramian $ \Gamma_\infty(A) := \sum_{s=0}^\infty A^s (A^s)^\top $ (it is indeed well-defined for stable $ A $). The tighter bound is obtained by constructing a suitable packing of a set of system matrices, and the proof is deferred to Appendix~\ref{app:dim_lower_bound}.
\begin{theorem}[Lower bound for stable $ A $ and small $ \varepsilon $]\label{thm:dim_lower_bound}
   Suppose that $ w_t \sim \calN(0,I) $ for any $ t \ge 0 $.
    Then, there exists a universal constant $ c > 0 $ such that for any stable matrix $ A $, for all $ \varepsilon \in (0, \|\Gamma_\infty (A) \|^{-3}/(12\sqrt{2})) $ and $ \delta \in (0,1/2) $, the sample complexity $ \tau_A $ of any $ (\varepsilon,\delta) $-locally stable algorithm in $ A $ with centered power-constrained excitation (verifying Assumptions \ref{ass:independent_input} and \ref{ass:power}) satisfies
    \begin{align}
        \lambda_{\min} \! \left( \sum_{s=1}^{\tau_A-1} \Sigma_s   \right) \ge \frac{c}{ (1+ \| B\|^2 \bar{u}) \varepsilon^2} \! \left( \log \! \left( \frac{1}{\delta}  \right) + \dimx \right) . \label{eq:lower_bound_dim}
    \end{align}
   \hfill $ \diamondsuit $
\end{theorem}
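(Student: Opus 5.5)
The bound splits into two parts. The $\log(1/\delta)$ term already follows from Proposition~\ref{prop:lower_bound_persis}, since $1+\|B\|^2\bar u\ge 1$ and $\log(1/(3\delta))\gtrsim \log(1/\delta)$ for $\delta<1/2$ after adjusting $c$. The $\dimx$ term is the new part. For it I will follow the packing and Fano-type approach of \cite{Jedra2023}, replacing their passive-excitation KL computation with one that carries the input covariance.

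\textbf{Step 1: change of measure.} For an alternative $A'$, the log-likelihood ratio of a trajectory of length $t$ under Gaussian noise is determined by the increments $(A-A')x_s$. The policy densities $\pi_s$ cancel because they do not depend on the system matrix. Hence
\begin{equation*}
\mathrm{KL}(\bbP_A\|\bbP_{A'}) = \tfrac12 \sum_{s=0}^{t-1} \bbE_A\!\left[\|(A-A')x_s\|^2\right] = \tfrac12 \trace\!\left((A-A')\Big(\sum_{s}\Sigma_s\Big)(A-A')^\top\right).
\end{equation*}

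\textbf{Step 2: packing.} I will build a packing $\{A_i\}$ of matrices $A_i=A+\varepsilon' M_i$ with $\varepsilon'\asymp \varepsilon$. The $M_i$ are rank-one matrices $v w_i^\top$, with $v$ a fixed unit vector and $\{w_i\}$ a $1/2$-packing of the unit sphere in $\bbR^{\dimx}$, of cardinality $e^{c\dimx}$. The scaling is chosen so that $\|A_i-A_j\|>2\varepsilon$ and $\|A_i-A\|\le 6\sqrt2\varepsilon$. Then, on the event $\|\what A_t-A_i\|\le\varepsilon$, the estimate identifies the index $i$. Local stability gives success probability at least $1-\delta\ge 1/2$ for every $i$, so Fano's inequality yields $\max_{i,j}\mathrm{KL}(\bbP_{A_i}\|\bbP_{A_j})\gtrsim \dimx$. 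Choosing the direction $w$ aligned with the minimum eigenvector is not available uniformly for the whole packing. I therefore intend to apply Fano with the mutual information bounded by the average over $i$ of $\mathrm{KL}(\bbP_{A_i}\|\bbP_{\bar{\bbP}})$, and to randomize over $w$ so that each term is governed by $\trace(\sum_s\Sigma_s^{(i)})/\dimx$. This part needs care. A cleaner alternative is to let the packing vary in a $\dimx$-dimensional subspace of directions and reduce to a bound of the form $\varepsilon^2\lambda_{\min}$ times a constant, in the spirit of \cite{Jedra2023}.

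\textbf{Step 3: changing covariances.} The KL divergence involves $\Sigma_s$ under the perturbed matrix $A_i$, not under $A$. I need
\begin{equation*}
\Sigma_s^{(A_i)} \preceq C(1+\|B\|^2\bar u)\,\|\Gamma_\infty(A)\|^{k}\cdot(\text{quantity tied to } \Sigma_s^{(A)}).
\end{equation*}
This is where the restriction $\varepsilon<\|\Gamma_\infty(A)\|^{-3}/(12\sqrt2)$ enters. A perturbation of size $6\sqrt2\varepsilon$ keeps $A_i$ stable, with $\Gamma_\infty(A_i)\preceq 2\Gamma_\infty(A)$; this follows from a Lyapunov-equation perturbation argument. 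By the power constraint and \eqref{eq:evolution_sigma}, every $\Sigma_s$ is then sandwiched between $I$ and $2(1+\|B\|^2\bar u)\Gamma_\infty(A)$. Consequently the ratio between the per-step information and $\lambda_{\min}(\sum\Sigma_s)$ costs only the factor $(1+\|B\|^2\bar u)$, absorbed via $\sum_s\Sigma_s\succeq (t-1)I$. Combining gives $\varepsilon^2(1+\|B\|^2\bar u)\lambda_{\min}(\sum_{s=1}^{\tau_A-1}\Sigma_s)\gtrsim \dimx$. Adding this to the $\log(1/\delta)$ bound, using $\max\ge$ average, proves \eqref{eq:lower_bound_dim}.

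\textbf{Main obstacle.} I expect Step 3 to be the main difficulty: controlling the trace-type KL under alternatives by $\lambda_{\min}$ under $A$, uniformly over adaptive policies. I would first try the crude sandwich above, paying the $\|\Gamma_\infty\|$ factors through the $\varepsilon$ restriction.
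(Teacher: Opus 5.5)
Your overall architecture (rank-one packing around $A$, a Fano-type bound, then a comparison of covariances under the alternatives) is the paper's, but two steps fail as written.

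\textbf{Packing orientation.} With $A_i=A+\varepsilon' v w_i^\top$ ($v$ fixed, $w_i$ varying), the information terms are $\tfrac12\varepsilon'^2 w_i^\top G w_i$ (or $\tfrac12\varepsilon'^2 (w_i-w_j)^\top G (w_i-w_j)$), where $G$ is the relevant state-covariance sum. These are not controlled by $\lambda_{\min}(G)$. Your randomization over $w$ replaces them by $\varepsilon'^2\trace(G)/\dimx$. Since $\trace(G)/\dimx\ge\lambda_{\min}(G)$, that only lower-bounds the \emph{average} eigenvalue, which is strictly weaker than the theorem. The missing idea is to transpose the construction. Fix the \emph{right} factor to the unit eigenvector $u$ of the smallest eigenvalue of $G_t(A)=\sum_{s=1}^{t-1}\Sigma_s$, computed for the given algorithm at $t=\tau_A$. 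Let the \emph{left} factor range over a packing of the sphere, $A_i=A+3\sqrt2\varepsilon\, v_i u^\top$. Then $\trace\bigl((A_i-A)^\top(A_i-A)G_t(A)\bigr)=18\varepsilon^2\lambda_{\min}(G_t(A))$ for every $i$. Meanwhile $\|A_i-A_j\|=3\sqrt2\varepsilon\|v_i-v_j\|$ still yields $e^{c\dimx}$ alternatives at mutual distance $>2\varepsilon$ inside the $6\sqrt2\varepsilon$-ball. This is the paper's $\calC^*_\zeta$ and Proposition~\ref{prop:packing}.

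\textbf{Covariance comparison.} Your sandwich $I\preceq\Sigma_s\preceq 2(1+\|B\|^2\bar u)\Gamma_\infty(A)$ gives a ratio $2(1+\|B\|^2\bar u)\|\Gamma_\infty(A)\|$, not $(1+\|B\|^2\bar u)$. This comparison carries no factor $\|A_i-A\|$, so no upper bound on $\varepsilon$ can absorb $\|\Gamma_\infty(A)\|$, which is unbounded over stable $A$. You would end with $\|\Gamma_\infty(A)\|$ in the denominator.

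The paper compares \emph{additively} instead. Its perturbed-Gramian lemma gives an error linear in $\|A_i-A\|=3\sqrt2\varepsilon$ times $\|\Gamma_\infty(A)\|^3$. The hypothesis $\varepsilon<\|\Gamma_\infty(A)\|^{-3}/(12\sqrt2)$ turns this into $O\bigl((1+\|B\|^2\bar u)(t-1)\bigr)$, which is absorbed via $\lambda_{\min}(G_t(A))\ge t-1$.

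Your sandwich does work if taken against $\Gamma_s$ and along $u$. Write $\Sigma_s^{A'}:=\bbE_{A'}[x_sx_s^\top]$. Induction on the covariance recursion gives $\Gamma_s(A')\preceq\Sigma_s^{A'}\preceq(1+\|B\|^2\bar u)\Gamma_s(A')$ for any centered policy with power constraint under $A'$. Hence
\begin{align*}
u^\top\Bigl(\sum_{s=1}^{t-1}\Sigma_s^{A_i}\Bigr)u
&\le (1+\|B\|^2\bar u)\sum_{s=1}^{t-1}\Bigl(u^\top\Gamma_s(A)u+\|\Gamma_s(A_i)-\Gamma_s(A)\|\Bigr)\\
&\le (1+\|B\|^2\bar u)\bigl(\lambda_{\min}(G_t(A))+8(t-1)\bigr)\le 9(1+\|B\|^2\bar u)\,\lambda_{\min}(G_t(A)).
\end{align*}
This uses three facts: $\sum_s u^\top\Gamma_s(A)u\le u^\top G_t(A)u=\lambda_{\min}(G_t(A))$; the paper's bound $\|\Gamma_s(A_i)-\Gamma_s(A)\|\le 32\|A_i-A\|\|\Gamma_\infty(A)\|^3<8$; and $\lambda_{\min}(G_t(A))\ge t-1$.

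Only $\Gamma_s$ is perturbed, and it does not depend on the policy, so this also settles your adaptivity concern. By contrast, the paper applies its lemma to $G_t(A_i)-G_t(A)$ as a whole, which tacitly takes the input covariances under $A$ and $A_i$ to coincide.

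Minor: $\log(1/(3\delta))\le 0$ for $\delta\ge 1/3$, so on that range the $\log(1/\delta)$ part cannot come from Proposition~\ref{prop:lower_bound_persis}. It must be covered by the $\dimx$ term, or, as in the paper, both terms can come at once from $\log(|\calP|/(4\delta))$.
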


The lower bounds of the sample complexity~\eqref{eq:lower_bound_persis},~\eqref{eq:lower_bound_dim} motivate us to maximize $ \lambda_{\min} (\sum_{s=1}^{\tau-1} \Sigma_s) $ with respect to $ \{U_s\} $ under a power constraint $ \trace(U_s) \le \bar{u} $.
\begin{align}
	&\underset{\{U_s\}_{s=0}^{\tau-2} }{\rm maximize}  ~~ && \lambda_{\min} \! \left(\sum_{s=1}^{\tau-1} \Sigma_s\right) , \label{prob:max_Aid_persis}\\
	&{\rm subject~to} \ && \Sigma_{s+1} = A\Sigma_s A^\top + BU_s B^\top + I , \\
   & &&\qquad\qquad \forall s \in \{0,\ldots,\tau-2\}, \nonumber\\
    & && \Sigma_0 = 0, \nonumber\\
    & && \trace(U_s) \le  \bar{u}, ~~ U_s \succeq 0, ~~ \forall s \in \{0,\ldots,\tau-2\}. \nonumber
\end{align}

The above optimization problem can be reformulated as an SDP by replacing the objective by $ \lambda \in \bbR $ and adding a constraint $ \sum_{s=1}^{\tau-1} \Sigma_{s} \succeq \lambda I $.
However, for system identification, $ A $ is unknown, and even if $ A $ is known, this problem is computationally expensive to solve for large $ \tau $, which corresponds to small $ \varepsilon $ and $ \delta $.
To remedy the computational issue, we provide more tractable lower bounds, which are weaker than \eqref{eq:lower_bound_persis},~\eqref{eq:lower_bound_dim}, but become tighter as $ \tau $ becomes large.
The proof is shown in Appendix~\ref{app:lower_bound_infty}.
\begin{corollary}\label{Cor:lower_bound_infty}
    Suppose that $ w_t \sim \calN(0,I) $ for any $ t \ge 0 $.
   Then, for any stable matrix $ A $, for all $ \varepsilon > 0 $ and $ \delta \in (0,1) $, the sample complexity $ \tau_{A} $ of any $ (\varepsilon,\delta) $-locally stable algorithm in $ A $ with centered control policy satisfies
   \begin{align}
        \tau_{A} - 1 &\ge \frac{1}{2\varepsilon^2 {\displaystyle \max_{U\succeq 0, \trace(U) \le \bar{u}}} \lambda_{\min} \! \left(\Gamma_\infty (A) + \Xi_\infty (A,U) \right)} \nonumber\\
        &\quad \times\log \! \left(\frac{1}{3 \delta}\right) ,\label{eq:lower_bound_infty_persis}
   \end{align}
   where $ \Xi_\infty(A,U) := \sum_{s=0}^\infty A^s BUB^\top (A^s)^\top $. 
   \hfill $ \diamondsuit $
\end{corollary}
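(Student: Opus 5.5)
We start from Proposition~\ref{prop:lower_bound_persis}. Writing $\tau=\tau_A$, the bound \eqref{eq:lower_bound_persis} gives
\[
\lambda_{\min}\Bigl(\sum_{s=1}^{\tau-1}\Sigma_s\Bigr)\ge \frac{1}{2\varepsilon^2}\log\Bigl(\frac{1}{3\delta}\Bigr).
\]
It therefore suffices to show the upper bound
\[
\lambda_{\min}\Bigl(\sum_{s=1}^{\tau-1}\Sigma_s\Bigr)\le (\tau-1)\max_{U\succeq 0,\ \trace(U)\le\bar u}\lambda_{\min}\bigl(\Gamma_\infty(A)+\Xi_\infty(A,U)\bigr).
\]
Rearranging then yields \eqref{eq:lower_bound_infty_persis}. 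We assume the power constraint $\trace(U_s)=\bbE\|u_s\|^2\le\bar u$ of Assumption~\ref{ass:power} is in force, since the maximization in the statement presupposes it.

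\textbf{Step 1: bound each term by infinite Gramians.} Unrolling \eqref{eq:evolution_sigma} with $\Sigma_0=0$ gives $\Sigma_s=\Gamma_s(A)+\Xi_s(A,\{U_k\}_{k=0}^{s-1})$. Since every summand is positive semidefinite, $\Gamma_s(A)\preceq\Gamma_\infty(A)$. Moreover,
\[
\Xi_s=\sum_{j=0}^{s-1}A^jBU_{s-1-j}B^\top(A^j)^\top\preceq\sum_{j=0}^{\infty}A^jBU_{s-1-j}B^\top(A^j)^\top,
\]
where we set $U_k:=0$ for $k<0$. Stability of $A$ guarantees convergence.

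\textbf{Step 2: sum over $s$ and average.} Summing over $s=1,\dots,\tau-1$ and exchanging sums gives
\[
\sum_{s=1}^{\tau-1}\Sigma_s\preceq(\tau-1)\Gamma_\infty(A)+\sum_{j\ge0}A^jB\Bigl(\sum_{k=0}^{\tau-2}U_k\Bigr)B^\top(A^j)^\top=(\tau-1)\bigl(\Gamma_\infty(A)+\Xi_\infty(A,\bar U)\bigr),
\]
where $\bar U:=\frac{1}{\tau-1}\sum_{k=0}^{\tau-2}U_k$. The key observation is that $\Xi_\infty(A,\cdot)$ is linear in $U$, so the time-varying covariances collapse to their average. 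This average satisfies $\bar U\succeq0$ and $\trace(\bar U)\le\bar u$, so it is feasible for the maximization.

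\textbf{Step 3: conclude.} Since $\lambda_{\min}$ is monotone with respect to $\preceq$ and positively homogeneous,
\[
\lambda_{\min}\Bigl(\sum_{s=1}^{\tau-1}\Sigma_s\Bigr)\le(\tau-1)\lambda_{\min}\bigl(\Gamma_\infty(A)+\Xi_\infty(A,\bar U)\bigr)\le(\tau-1)\max_{U\succeq 0,\ \trace(U)\le\bar u}\lambda_{\min}\bigl(\Gamma_\infty(A)+\Xi_\infty(A,U)\bigr).
\]
The maximum is attained because the feasible set is compact and the objective is continuous.

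The main obstacle is the averaging in Step 2, i.e.\ replacing the adaptive, time-varying $U_k$ by a single feasible $U$. Linearity of $\Xi_\infty$ handles this, together with the index bookkeeping when exchanging the double sum. Beyond that, one only needs the positive maximum in the denominator, which holds since $\Gamma_\infty(A)\succeq I$.
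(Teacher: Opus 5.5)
Your proof is correct and follows the paper's argument. Both bound $\sum_{s=1}^{\tau-1}\Sigma_s \preceq (\tau-1)\Gamma_\infty(A)+\sum_{s=0}^{\tau-2}\Xi_\infty(A,U_s)$ and then use linearity of $\Xi_\infty(A,\cdot)$; your averaged covariance $\bar U$ makes explicit the paper's final equality, which it states without comment, and the paper likewise silently assumes the power constraint you flag. One minor point: your Step 1 inequality is actually an equality under the convention $U_k=0$ for $k<0$, and the only slack enters in Step 2, where $\sum_{k=0}^{\tau-2-j}U_k$ is enlarged to $\sum_{k=0}^{\tau-2}U_k$.
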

\begin{corollary}\label{Cor:lower_bound_infty_dim}
    Suppose that $ w_t \sim \calN(0,I) $ for any $ t \ge 0 $. Then, there exists a universal constant $ c > 0 $ such that for any stable matrix $ A $, for all $ \varepsilon \in (0, \|\Gamma_\infty (A) \|^{-3}/(12\sqrt{2})) $ and $ \delta \in (0,1/2) $, the sample complexity $ \tau_A $ of any $ (\varepsilon,\delta) $-locally stable algorithm in $ A $ with centered power-constrained excitation (verifying Assumptions \ref{ass:independent_input} and \ref{ass:power}) satisfies
   \begin{align}
       &\tau_A - 1 \nonumber\\
       &\ge \frac{c}{ (1+ \| B\|^2 \bar{u}) \varepsilon^2 {\displaystyle \max_{U\succeq 0, \trace(U) \le \bar{u}}} \lambda_{\min} \! \left(\Gamma_\infty (A) + \Xi_\infty (A,U) \right)} \nonumber\\
       &\quad \times \left( \log \! \left( \frac{1}{\delta}  \right) + \dimx \right) . \label{eq:lower_bound_dimension}
   \end{align}
   \hfill $ \diamondsuit $
\end{corollary}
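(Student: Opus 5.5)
The plan is to deduce Corollary~\ref{Cor:lower_bound_infty_dim} from Theorem~\ref{thm:dim_lower_bound}. We upper bound the left-hand side of \eqref{eq:lower_bound_dim} by $(\tau_A-1)$ times the optimal value of the stationary problem
\[
\max_{U\succeq 0,\ \trace(U)\le\bar u}\lambda_{\min}\bigl(\Gamma_\infty(A)+\Xi_\infty(A,U)\bigr).
\]
The argument runs parallel to the derivation of Corollary~\ref{Cor:lower_bound_infty} from Proposition~\ref{prop:lower_bound_persis}. The only difference is the right-hand side that is carried along, so we fix the same $\varepsilon,\delta$ ranges and the universal constant $c$ from Theorem~\ref{thm:dim_lower_bound}.

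First we expand the sum using \eqref{eq:evolution_sigma} and the decomposition stated after Proposition~\ref{prop:lower_bound_persis}:
\[
\sum_{s=1}^{\tau-1}\Sigma_s=\sum_{s=1}^{\tau-1}\bigl(\Gamma_s(A)+\Xi_s(A,\{U_k\}_{k=0}^{s-1})\bigr).
\]
Each term is positive semidefinite and monotone in $s$. Hence $\Gamma_s(A)\preceq\Gamma_\infty(A)$, which is finite because $A$ is stable. For the input part, we write $\Xi_s=\sum_{k=0}^{s-1}A^{s-1-k}BU_kB^\top(A^{s-1-k})^\top$ and exchange the order of summation over $s$ and $k$:
\[
\sum_{s=1}^{\tau-1}\Xi_s=\sum_{k=0}^{\tau-2}\sum_{j=0}^{\tau-2-k}A^jBU_kB^\top(A^j)^\top\preceq\sum_{k=0}^{\tau-2}\Xi_\infty(A,U_k).
\]
Linearity of $\Xi_\infty$ in $U$ then gives
\[
\sum_{s=1}^{\tau-1}\Sigma_s\preceq(\tau-1)\bigl(\Gamma_\infty(A)+\Xi_\infty(A,\bar U)\bigr),\qquad \bar U:=\frac{1}{\tau-1}\sum_{k=0}^{\tau-2}U_k.
\]
Here $U_k=\bbE[u_ku_k^\top]\succeq0$, and Assumption~\ref{ass:power} gives $\trace(U_k)=\bbE\|u_k\|^2\le\bar u$. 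By convexity of the feasible set, $\bar U$ is also feasible.

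Since $\lambda_{\min}$ is monotone with respect to $\preceq$, we obtain
\[
\lambda_{\min}\Bigl(\sum_{s=1}^{\tau_A-1}\Sigma_s\Bigr)\le(\tau_A-1)\max_{U\succeq0,\ \trace(U)\le\bar u}\lambda_{\min}\bigl(\Gamma_\infty(A)+\Xi_\infty(A,U)\bigr).
\]
Combining this with \eqref{eq:lower_bound_dim} and dividing by the maximum gives \eqref{eq:lower_bound_dimension}. The maximum is positive because $\Gamma_\infty(A)\succeq I$, so the division is legitimate. It is also attained, since the objective is continuous on a compact set.

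The only delicate step is the order-exchange bound on $\sum_s\Xi_s$ with time-varying $U_k$. Because the Gramians are linear in $U_k$, this step reduces to monotonicity of partial sums of PSD terms. The remaining steps are routine.
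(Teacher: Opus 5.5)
Your proposal is correct and follows essentially the same route as the paper's proof in Appendix~\ref{app:lower_bound_infty}. Like the paper, you expand $\sum_{s=1}^{\tau-1}\Sigma_s$ via \eqref{eq:evolution_sigma}, dominate it by $(\tau-1)\Gamma_\infty(A)+\sum_{s=0}^{\tau-2}\Xi_\infty(A,U_s)$, reduce to a single stationary covariance, and combine with Theorem~\ref{thm:dim_lower_bound}. Your explicit averaging $\bar U=\frac{1}{\tau-1}\sum_k U_k$, feasible by convexity and using linearity of $\Xi_\infty$ in $U$, is exactly what justifies the paper's unexplained final equality between the maximum over sequences $\{U_s\}$ and $(\tau-1)$ times the maximum over a single $U$.
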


The optimization problem $ \max_{U\succeq 0, \trace(U) \le \bar{u}} \lambda_{\min} (\Gamma_\infty (A) + \Xi_\infty (A,U)) $ can be solved efficiently. In fact, it is known that if $ A $ is stable, $ \Gamma_\infty(A) + \Xi_\infty (A,U) $, which is the reachability Gramian of the system $ x_{t+1} = Ax_t + (I +BUB^\top)^{1/2} u_t $, is a unique solution to the Lyapunov equation $ P = APA^\top + I + BUB^\top $~\cite{Chen1984}. Moreover, since the maximization of $ \lambda_{\min} (P) $ is equivalent to maximizing $ \lambda $ subject to the constraint $ P \succeq \lambda I $, this problem can be rewritten as
\begin{align}
	&\underset{U\succeq 0, P \succeq 0, \lambda \ge 0 }{\rm maximize}   && \lambda , \label{prob:max_Aid_SDP_infty}\\
	&{\rm subject~to}  && P - \lambda I \succeq  0 , \nonumber\\
    & && P - APA^\top - I - BUB^\top = 0 , \nonumber\\
    & && \trace(U) \le \bar{u} , \nonumber
\end{align}
which is an SDP, and thus, can be solved by convex optimization techniques. Notably, unlike problem~\eqref{prob:max_Aid_persis}, the above problem does not depend on $ \tau $, which ensures computational efficiency even for large $ \tau $.

\section{Finite-time Analysis of the LSE with Active Excitation}

Following the literature on system identification, we adopt the LSE to estimate $ A $.
The LSE for a given trajectory $ \{x_0,\ldots,x_t,u_0,\ldots,u_{t-1}\} $ up to time $ t $ is given by
\begin{equation}
    \forall t \ge 1, ~~ \what{A}_t \in \argmin_{A' \in \bbR^{\dimx \times \dimx}} \sum_{s=0}^{t-1} \| x_{s+1} - A' x_s - Bu_s \|^2  . \label{eq:LSE_persis}
\end{equation}
It admits the following closed-form solution:
\begin{equation}
    \forall t \ge 1, ~~ \what{A}_t = \left( \sum_{s=1}^{t-1} (x_{s+1} - Bu_s) x_s^\top  \right) \left( \sum_{s=1}^{t-1} x_s x_s^\top \right)^\dagger , \label{eq:LES_persis_explicit}
\end{equation}
where $ \dagger $ represents the Moore--Penrose inverse.
The estimation error is given by
\begin{align}
    \forall t \ge 1, ~~ \what{A}_t - A &= \left( \sum_{s=1}^{t-1} w_s x_s^\top  \right) \left( \sum_{s=1}^{t-1} x_s x_s^\top \right)^\dagger \nonumber\\
    &= W^\top X \left(X^\top X \right)^\dagger , \label{eq:error_closed_persis}
\end{align}
where $ X := [x_1,x_2,\ldots,x_{t-1}]^\top $ and $ W := [w_1,w_2,\ldots,w_{t-1}]^\top $.

\begin{algorithm}[tb]
    \caption{Active Learning of $ A $}
    \label{alg:learning}
    \begin{algorithmic}[1]
    \STATE \textbf{Input:} Input power $\bar{u}$, initial horizon $t_0$, projection $ \Pi : \bbR^{\dimx \times \dimx} \rightarrow \Theta_d $
    \FOR{$t = 0$ to $t_0 -1$}
        \STATE Inject $ u_t $ such that $ \bbE[u_t] = 0 $, $ \bbE[u_t u_t^\top] = (\bar{u}/\dimx) I $ to the system $ x_{t+1} = Ax_t + Bu_t + w_t $
        \STATE Obtain LSE $ \what{A}_{t+1} $ by \eqref{eq:LES_persis_explicit}
    \ENDFOR
    \IF{$ \rho(\what{A}_{t_0}) < 1 $}
        \STATE $ \bar{A}_{t_0} \leftarrow \what{A}_{t_0} $
    \ELSIF{$ \rho(\what{A}_{t_0}) \ge 1 $}
        \STATE Perform projection $ \bar{A}_{t_0} \leftarrow \Pi(\what{A}_{t_0}) $
    \ENDIF
    \STATE Solve $ \what{U} \in {\displaystyle \argmax_{U\succeq 0, \trace(U) \le \bar{u}}} \lambda_{\min} \! \left( \Gamma_\infty (\bar{A}_{t_0}) + \Xi_\infty (\bar{A}_{t_0}, U) \right) $
    \STATE Reset $ x_{t_0} \leftarrow 0 $
    \FOR{$t = t_0, t_0 + 1,\ldots $}
        \STATE Inject $ u_t $ such that $ \bbE[u_t | \calF_t] = 0 $, $ \bbE[u_t u_t^\top | \calF_{t_0}] = \what{U} $ to the system $ x_{t+1} = Ax_t + Bu_t + w_t $
        \STATE Obtain LSE $ \what{A}_{t+1} $ by \eqref{eq:LES_persis_explicit} using $ \{x_s,u_s\}_{s=t_0}^{t+1} $
    \ENDFOR
    \end{algorithmic}
\end{algorithm}

As observed in Subsection~\ref{subsec:lower_bound}, the excitation input whose covariance maximizes $ \lambda_{\min} (\Gamma_\infty(A) + \Xi_\infty (A,U)) $ will effectively reduce the sample complexity. However, since we do not know $ A $ initially, we first use a predetermined input covariance to obtain an initial estimate of $ A $, and after that, we maximize the surrogate objective $ \lambda_{\min} (\Gamma_\infty(A) + \Xi_\infty (A,U)) $ where $ A $ is replaced by the initial estimate, and excite the system using the resulting input covariance. More formally, our estimation algorithm proceeds as follows:
\paragraph*{Estimation algorithm}
For the estimation of stable $ A $, we first estimate $ A $ using $ \{x_s,u_s\}_{s=0}^{t_0} $ with predetermined input covariance $ U_s = (\bar{u}/\dimx) I $ for $ s \in \{0,\ldots,t_0-1\}  $ satisfying $ \trace(U_s) \le \bar{u} $.
Since the LSE $ \what{A}_{t_0} $ can be unstable, we use a projection $ \Pi : \bbR^{\dimx \times \dimx} \rightarrow \Theta_d := \{ A \in \bbR^{\dimx \times \dimx} : \rho (A) \le 1 - d\} $, $ d > 0 $, which ensures that $ \bar{A}_{t_0} := \Pi(\what{A}_{t_0}) $ is stable and $ \| \what{A}_{t_0} - \bar{A}_{t_0} \| $ is small as possible.\footnote{Efficient algorithms that give a nearby stable approximation to a given unstable matrix are proposed, for example, in \cite{Orbandexivry2013,Gillis2019}.} If $ \what{A}_{t_0} $ is stable, then $ \bar{A}_{t_0} := \what{A}_{t_0} $.
Next, the input covariance $ U_s = \what{U} $ after time $ s = t_0 $ is designed so that it maximizes $ J_{\bar{A}_{t_0}} (\what{U}) := \lambda_{\min} \! \left( \Gamma_\infty (\bar{A}_{t_0}) + \Xi_\infty (\bar{A}_{t_0}, \what{U}) \right) $ under the constraint $ \trace(\what{U}) \le \bar{u} $. After resetting the state to $ x_{t_0} = 0 $, we use the designed input $ u_s $ satisfying $ \bbE[u_s u_s^\top | \calF_{t_0}] = \what{U} $ for any $ s \ge t_0 $ to excite the system and obtain the LSE $ \what{A}_{t} $ using the trajectory $ \{x_s,u_s\}_{s=t_0}^t $ only after $ s=t_0 $. The pseudocode is given in Algorithm~\ref{alg:learning}.
 \hfill $ \diamondsuit $

In the above algorithm, we assume that resetting the state is possible and do not leverage the data $ \{x_s,u_s\}_{s=0}^{t_0-1} $ used in the initial phase to estimate $ \what{A}_t $ for $t>t_0$. These assumptions are made solely to simplify the sample complexity analysis. In practice, retaining the trajectory data over the initial horizon without resetting the state will improve the sample complexity. 
Nevertheless, we will see in Theorem~\ref{thm:upper_bound} that the above algorithm with suitable initial horizon $ t_0 $ achieves a nearly optimal sample complexity.

For $ \what{U} $ in Algorithm~\ref{alg:learning}, define $ Q_{\what{U}} := B\what{U}B^\top + I $ and $ \eta_t := ((\bar{u}/\dimx )B B^\top + I)^{-1/2} (Bu_t + w_t)  $ for $ t \in \{0,\ldots,t_0-1\} $ and $ \eta_t := Q_{\what{U}}^{-1/2} (Bu_t + w_t) $ for $ t \ge t_0 $.
For the finite-time analysis of the LSE, we assume the sub-Gaussianity of $ \eta_t $~\cite{Vershynin2018}.
\begin{assumption}[Sub-Gaussianity]\label{ass:subgauss}
    The $ \psi_2 $-norm of each component of $ \eta_t $ is bounded from above by $ K $,\footnote{\label{note3}The $ \psi_2 $-norm of an $ \bbR $-valued random variable $ X $ is defined as $ \| X \|_{\psi_2} := \inf \{ K > 0 : \bbE[\exp(X^2/K^2)] \le 2 \} $. If $ \| X \|_{\psi_2} < \infty  $, then $ X $ is said to be sub-Gaussian. If $ X\sim \calN(0,\sigma^2) $, then $ \| X \|_{\psi_2} = C_g \sigma $, where $ C_g $ is a universal constant.} and $ \eta_t $ has independent components for any $ t \ge 0 $.
    \hfill $ \diamondsuit $
\end{assumption}

If $ u_t $ and $ w_t $ have sub-Gaussian components, then $ \eta_t $ is also sub-Gaussian and has a finite $ \psi_2 $-norm. For example, if $ w_t \sim \calN(0,I) $ and $u_t \sim \calN(0,(\bar{u}/\dimx) I )$ for $t \le t_0-1$ and $ u_t | \what{U} \sim \calN(0,\what{U}) $ for $t \ge t_0$, then $ \eta_t = [\eta_{t,1},\ldots,\eta_{t,\dimx}]^\top \sim \calN(0,I) $ has independent components and $ \| \eta_{t,i} \|_{\psi_2} = C_g $ (where $C_g$ is a universal constant).

Now, we derive an upper bound of the sample complexity of Algorithm~\ref{alg:learning}.
Let $ \varepsilon_{t_0} > 0 $ be the minimum $ \varepsilon $ such that \eqref{eq:upper_bound_persis} in Subsection~\ref{subsec:upper_bound_proof} with $ t = t_0 $, $ U =  (\bar{u}/\dimu) I $, and $ \delta \leftarrow \delta/2 $ is satisfied. Define $ U^* (A) $ as any element of $ \argmax_{U\succeq 0, \trace(U) \le \bar{u}} \lambda_{\min} ( \Gamma_\infty (A) + \Xi_\infty (A,U) ) $ and $ \bar{\gamma} := (1 + \|B\|^2 \bar{u}) (\sum_{s=0}^\infty \| A^s \| )^2  < \infty $.

\begin{theorem}[Sample complexity upper bound]\label{thm:upper_bound}
    Suppose that Assumption \ref{ass:subgauss} holds.
    Assume that $ t_0 $ is chosen such that $ \varepsilon_{t_0} \le \| \Gamma_\infty (A) \|^{-3/2} /4 $. Assume also that $ \{u_t\}_{t=0}^{t_0 -1} $ are mutually independent, and $ \{u_t\}_{t\ge t_0} $ are mutually independent, conditioned on $ \{x_{s+1},u_s\}_{s=0}^{t_0-1} $.
    Then, for all $ \varepsilon > 0 $ and $ \delta \in (0,1) $, Algorithm~\ref{alg:learning} yields $ \bbP \! \left( \| \what{A}_t - A \| > \varepsilon  \right) \le \delta $ for $ t \ge t_0 $, as long as the following condition holds:
    \begin{align}
        t &\ge 1+ \frac{C}{\lambda_{\min} (\Gamma_\infty (A) + \Xi_\infty (A,U^*(A)))} \max \! \left\{ \frac{1}{\varepsilon^2} , \bar{\gamma} \right\} \nonumber\\
         &\quad \times\left( \log \! \left( \frac{2}{\delta}\right) + \dimx  \right) \nonumber\\
         &+ \frac{\|\Gamma_\infty(A) \|^2}{\lambda_{\min} (\Gamma_\infty (A) + \Xi_\infty (A,U^*(A)))} \Bigl(C_1  (t-1)\varepsilon_{t_0}  + C_2 t_0  \nonumber\\
         & + C_2 (t-1) (1-\|\Gamma_\infty(A) \|^{-1})^{t-1} + C_2 (\|\Gamma_\infty(A)\| - 1) \Bigr)  \label{eq:upper_bound_algo} ,
    \end{align}
    where $ C_1 := 64 (1 + \| B\|^2 \bar{u})  \| \Gamma_\infty(A) \| $, $ C_2 := 1+\|B\|^2 \bar{u}  $, $ C := c' K^4 $, and $ c' > 0 $ is a universal constant.
    Moreover, for any $ t \ge 0 $, it holds that $ \bbE[\|u_t \|^2] \le \bar{u} $.
    \hfill $ \diamondsuit $
\end{theorem}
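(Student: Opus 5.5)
The argument has three layers. First, a finite-time LSE bound for a fixed input covariance (the bound \eqref{eq:upper_bound_persis} of Subsection~\ref{subsec:upper_bound_proof}). Second, a perturbation argument showing that the covariance $\what{U}$, computed from $\bar A_{t_0}$, is nearly optimal for the true $A$. Third, a union bound over the two phases.

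\textbf{Step 1: fixed-covariance LSE bound.} For a deterministic covariance $U$ with $\trace(U)\le\bar u$, consider a trajectory started at $0$ and driven by $Bu_s+w_s=Q_U^{1/2}\eta_s$, with $\eta_s$ independent and sub-Gaussian (Assumption~\ref{ass:subgauss}). I follow the approach of \cite{Jedra2023}. Write $\what A_t-A=W^\top X(X^\top X)^{\dagger}$ and bound
\[
\|\what A_t-A\|\le \|(X^\top X)^{-1/2}X^\top W\|\,\lambda_{\min}(X^\top X)^{-1/2}.
\]
The self-normalized term is at most $K\sqrt{\log(1/\delta)+\dimx}$ up to constants. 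For the lower bound on $\lambda_{\min}(X^\top X)$, stack $X$ as a linear map of the stacked noise $\eta$. Hanson--Wright plus an $\varepsilon$-net then gives $X^\top X\succeq\frac12\sum_{s=1}^{t-1}\Sigma_s$ once $t$ exceeds roughly $K^4\bar\gamma(\log(1/\delta)+\dimx)$. The constant $\bar\gamma$ enters because the operator norm of the stacked map is at most $(1+\|B\|^2\bar u)^{1/2}\sum_s\|A^s\|$. The Gramian comparison is
\[
\sum_{s=1}^{t-1}\Sigma_s=\sum_{s=1}^{t-1}\bigl(\Gamma_s(A)+\Xi_s(A,U)\bigr)\succeq (t-1)\bigl(\Gamma_\infty+\Xi_\infty(A,U)\bigr)-\sum_{s\ge1}\sum_{k\ge s}A^kQ_U(A^k)^\top .
\]
I bound the tail using $\|A^k\|^2$-type estimates. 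Combining with $\|A^k(A^k)^\top\|\le\|\Gamma_\infty\|(1-\|\Gamma_\infty\|^{-1})^k$, which follows from the Lyapunov equation $\Gamma_\infty=A\Gamma_\infty A^\top+I$, produces the $C_2$ terms, including $C_2(\|\Gamma_\infty\|-1)$ and $C_2(t-1)(1-\|\Gamma_\infty\|^{-1})^{t-1}$. The $C_2t_0$ term accounts for discarding the first $t_0$ samples.

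\textbf{Step 2: the initial phase.} Apply Step~1 with $U=(\bar u/\dimx)I$ and confidence $\delta/2$. This gives $\|\what A_{t_0}-A\|\le\varepsilon_{t_0}\le\|\Gamma_\infty(A)\|^{-3/2}/4$ with probability at least $1-\delta/2$. Since $\rho(A)<1$ and the perturbation is small relative to $\|\Gamma_\infty\|^{-1/2}$, a Lyapunov argument shows that $\what A_{t_0}$ is stable. Indeed,
\[
\what A\Gamma_\infty\what A^\top\preceq\Gamma_\infty-I+O\bigl(\|\Gamma_\infty\|^{3/2}\varepsilon_{t_0}\bigr)\prec\Gamma_\infty,
\]
so no projection occurs and $\bar A_{t_0}=\what A_{t_0}$. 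Next I prove a perturbation bound for the Gramian: for any $U$ with $\trace U\le\bar u$,
\[
\bigl\|\Gamma_\infty(\bar A)+\Xi_\infty(\bar A,U)-\Gamma_\infty(A)-\Xi_\infty(A,U)\bigr\|\le 32(1+\|B\|^2\bar u)\|\Gamma_\infty(A)\|^2\varepsilon_{t_0}.
\]
To prove it, subtract the two Lyapunov equations, so that the difference $\Delta P$ solves
\[
\Delta P=A\,\Delta P\,A^\top+(\bar A-A)\bar P\bar A^\top+A\bar P(\bar A-A)^\top .
\]
Bounding $\|\bar P\|\le 2(1+\|B\|^2\bar u)\|\Gamma_\infty(A)\|$ closes the estimate. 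Because $\what U$ maximizes $J_{\bar A}$, applying this bound at both $\what U$ and $U^*(A)$ yields
\[
\lambda_{\min}\bigl(\Gamma_\infty(A)+\Xi_\infty(A,\what U)\bigr)\ge\lambda_{\min}\bigl(\Gamma_\infty(A)+\Xi_\infty(A,U^*)\bigr)-64(1+\|B\|^2\bar u)\|\Gamma_\infty\|^2\varepsilon_{t_0}.
\]
Multiplied by $(t-1)$, this loss is exactly the $C_1(t-1)\varepsilon_{t_0}$ term.

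\textbf{Step 3: the second phase and conclusion.} Conditioned on $\calF_{t_0}$, $\what U$ is fixed and the reset trajectory satisfies the hypotheses of Step~1, using the assumed conditional independence. Requiring the lower bound on $\lambda_{\min}(\sum\Sigma_s)$ from Steps~1--2 to exceed $CK^4\max\{\varepsilon^{-2},\bar\gamma\}(\log(2/\delta)+\dimx)$, and rearranging, gives \eqref{eq:upper_bound_algo}. A union bound over the two phases gives total failure probability at most $\delta$. The power claim is immediate: $\trace((\bar u/\dimx)I)=\bar u$ and $\trace\what U\le\bar u$, so $\bbE\|u_t\|^2=\bbE[\trace\,\bbE[u_tu_t^\top\mid\calF_{t_0}]]\le\bar u$.

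The main obstacle is Step~1's lower-isometry (Hanson--Wright/net) bound with the explicit $\bar\gamma$, together with the bookkeeping of the Gramian tail terms. The perturbation bound in Step~2, which requires $\bar A$ to be stable with controlled $\|\bar P\|$, is the second delicate point. For it I would first try the Lyapunov-difference argument and absorb $\bar P$ using the condition on $\varepsilon_{t_0}$.
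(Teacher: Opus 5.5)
Your proposal is correct, and its skeleton matches the paper's. It uses the fixed-covariance LSE guarantee (Proposition~\ref{prop:LSE_performance_persis}, via the isometry Lemma~\ref{lem:isometry_persis}). An initial phase gives $\|\what A_{t_0}-A\|\le\varepsilon_{t_0}$ with probability at least $1-\delta/2$. The optimality sandwich of Proposition~\ref{prop:perturbation_infty_persis_modi} follows, then finite-versus-infinite Gramian comparisons, and finally a conditional application of the LSE bound to the reset trajectory.

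The genuine difference is how you prove stability of $\what A_{t_0}$ and the Gramian perturbation bound. The paper's Lemma~\ref{lem:perturbed_gramian} sums the power bounds of \cite{Mania2019} on $\|(A+\Delta)^k-A^k\|$ and ends with a factor $\|\Gamma_\infty(A)\|^3$. You instead use the Lyapunov inequality $\what A\,\Gamma_\infty\what A^\top\preceq\Gamma_\infty-\alpha I$ and subtract the two Lyapunov equations, and this works:
\begin{itemize}
\item Under the hypothesis, the cross term is at most $2\|\Delta\|\|\Gamma_\infty\|^{1/2}\|\Gamma_\infty-I\|^{1/2}\le 1/4$, so $\alpha\ge 11/16$.
\item This certifies $\rho(\what A_{t_0})<1$ and gives $\Gamma_\infty(\what A_{t_0})\preceq\alpha^{-1}\Gamma_\infty(A)$, hence $\|\bar P\|\le 2(1+\|B\|^2\bar u)\|\Gamma_\infty(A)\|$.
\item Then $\|E\|\le(2+\|\Delta\|)\|\Delta\|\|\bar P\|$, and $\|\Delta P\|\le\|E\|\|\Gamma_\infty(A)\|$ because $E$ is symmetric.
\end{itemize}
The result is an $O\bigl((1+\|B\|^2\bar u)\|\Gamma_\infty(A)\|^2\|\Delta\|\bigr)$ bound. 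It is one power of $\|\Gamma_\infty(A)\|$ sharper than the paper's, consistent with the scalar sensitivity $\rmd\Gamma_\infty/\rmd a=2a\Gamma_\infty^2$, and it needs only $\|\Delta\|\lesssim\|\Gamma_\infty(A)\|^{-1}$. Consequently your loss is not ``exactly'' the $C_1(t-1)\varepsilon_{t_0}$ term but smaller by a factor $\|\Gamma_\infty(A)\|$. That is fine, since \eqref{eq:upper_bound_algo} is only a sufficient condition.

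Likewise, your single tail estimate merges the paper's \eqref{eq:bound_step2_modi} and \eqref{eq:bound_step3_modi}. With $P_\infty:=\Gamma_\infty(A)+\Xi_\infty(A,U)$, it rests on the identity $(t-1)P_\infty-\sum_{s=1}^{t-1}\sum_{k<s}A^kQ_U(A^k)^\top=\sum_{k\ge1}\min(k,t-1)A^kQ_U(A^k)^\top$. It shows that the $C_2(t-1)(1-\|\Gamma_\infty(A)\|^{-1})^{t-1}$ term is superfluous.

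What the paper's route buys is uniformity. Lemma~\ref{lem:perturbed_gramian} holds for every finite horizon and for time-varying $\{U_k\}$, which the packing argument of Theorem~\ref{thm:dim_lower_bound} needs, so one lemma serves both the lower and upper bounds. Your argument is tailored to the stationary infinite-horizon Gramian, which is all the upper bound uses.

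Two small slips. In Step~1, the isometry requirement should be stated on $\lambda_{\min}(\sum_s\Sigma_s)$ rather than on $t$. The $t$-version is sufficient since $\sum_{s=1}^{t-1}\Sigma_s\succeq(t-1)I$, but it would not give the $\bar\gamma/\lambda_{\min}$ form of the theorem; you do use the correct version in Step~3. Also, writing $CK^4$ double-counts $K^4$, since $C=c'K^4$ already.
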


The second term of the right-hand side of \eqref{eq:upper_bound_algo} coincides with the sample complexity lower bound~\eqref{eq:lower_bound_dimension} up to a multiplicative factor independent of $ (\varepsilon,\delta) $, $ \dimx $, and $ A $ in a regime where $ \varepsilon $ is small such that $\varepsilon^2 \le \bar{\gamma}^{-1} $.
Since $ 1-\|\Gamma_\infty(A) \|^{-1} \in (0,1) $, as $ t $ increases, $ C_2 (t-1) (1-\|\Gamma_\infty(A) \|^{-1})^{t-1} $ in the third term becomes rapidly negligible.
The terms $ C_1 (t-1) \varepsilon_{t_0} $ and $ C_2 t_0 $ represent the losses due to the discrepancy between $ U^* (A) $ and $ U^*(\what{A}_{t_0}) $ and not using $ \{x_s,u_s\}_{s=0}^{t_0 -1} $ for the estimate, respectively.

We will establish in Proposition~\ref{prop:LSE_performance_persis} in Subsection~\ref{subsec:upper_bound_proof} that $ \varepsilon_{t_0} $ scales as $ \varepsilon_{t_0} = O(1/\sqrt{t_0}) $. Ignoring the term $ C_2 (t-1) (1-\|\Gamma_\infty(A) \|^{-1})^{t-1} $, the third term in \eqref{eq:upper_bound_algo} is minimized by $ t_0 = \Theta( (t-1)^{2/3} ) $, and its minimum value is $ \Theta ((t-1)^{2/3}) $. 
Therefore, for small $\varepsilon$ and $\delta$, the linear term $ t $ dominates the sub-linear term $ \Theta((t-1)^{2/3}) $, and the sample complexity upper bound \eqref{eq:upper_bound_algo} coincides with the lower bound~\eqref{eq:lower_bound_dimension} up to multiplicative and additive factors.

\subsection{Proof Sketch of Theorem~\ref{thm:upper_bound}}\label{subsec:upper_bound_proof}
We provide a sketch of the proof of Theorem~\ref{thm:upper_bound}. The detailed proof is given in Appendix~\ref{app:upper_bound}.
The first key ingredient of the proof is a performance guarantee for the LSE when using a fixed excitation covariance $ U_s \equiv U $.
Similar to the case without control input~\cite[Theorem~1]{Jedra2020}, we obtain the following result. The proof is provided in Appendix~\ref{app:LSE_performance}.
\begin{proposition}[LSE performance]\label{prop:LSE_performance_persis}
   Fix any $ U \succeq 0 $, and let $ \{u_t\} $ be zero-mean, mutually independent random variables such that $ \bbE[u_t u_t^\top] =  U $ for any $ t \ge 0 $. Suppose that $ \eta_t := Q_{U}^{-1/2} (Bu_t + w_t) $ satisfies Assumption~\ref{ass:subgauss}.
    Let $ \what{A}_t $ be the LSE given in \eqref{eq:LES_persis_explicit}. Then, for all $ \varepsilon > 0 $ and $ \delta \in (0,1) $, we have $ \bbP \! \left( \| \what{A}_t - A \| > \varepsilon  \right) \le \delta $, as long as the following condition holds:
    \begin{align}
       & \lambda_{\min} \! \left(  \sum_{s=1}^{t-1} \left( \Gamma_s(A) + \Xi_s (A, \{U_k\}_{k=0}^{s-1}) \right) \right) \nonumber\\
       &\qquad \ge C \max \left\{ \frac{1}{\varepsilon^2} , \|\Gamma_{U} \|^2 \right\} \left( \log \left( \frac{1}{\delta}\right) + \dimx  \right),\label{eq:upper_bound_persis}
    \end{align}
    where $ C := c' K^4 $, $ c' > 0 $ is a universal constant, and $ \Gamma_{U} $ is defined in Appendix~\ref{app:LSE_performance}.
    \hfill $ \diamondsuit $
\end{proposition}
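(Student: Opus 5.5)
We follow the standard route for LSE error bounds (as in \cite{Jedra2020}), adapted to the whitened noise $\eta_t = Q_U^{-1/2}(Bu_t+w_t)$. Since the inputs are zero-mean and independent with covariance $U$, the process obeys $x_{t+1} = Ax_t + Q_U^{1/2}\eta_t$, where $\eta_t$ is isotropic with independent sub-Gaussian components. This is exactly an autonomous linear system driven by sub-Gaussian noise with covariance $Q_U = BUB^\top + I$. Its state covariance satisfies $\Sigma_s = \Gamma_s(A) + \Xi_s(A,U)$, which justifies the left-hand side of \eqref{eq:upper_bound_persis}. One caveat: $\eta_t$ drives $x$, but the LSE error \eqref{eq:error_closed_persis} involves only $w_s$. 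We therefore write $w_s$ as a fixed linear image of a sub-Gaussian vector, or treat the pair $(u_s,w_s)$ jointly. In either case the numerator $W^\top X$ is a martingale-type sum $\sum_s w_s x_s^\top$ with $x_s$ being $\calF_s$-measurable and $w_s$ independent of $\calF_s$.

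\textbf{Step 1 (decomposition).} Let $\bar\Sigma_t := \sum_{s=1}^{t-1}\Sigma_s$. We bound
\[
\|\what A_t - A\| \le \bigl\|W^\top X \bar\Sigma_t^{-1/2}\bigr\| \, \bigl\|\bar\Sigma_t^{1/2}(X^\top X)^{-1}\bar\Sigma_t^{1/2}\bigr\| \, \lambda_{\min}(\bar\Sigma_t)^{-1/2},
\]
on the event that $X^\top X$ is invertible. The proof then reduces to two probabilistic statements, each holding with probability at least $1-\delta/2$:
\begin{enumerate}
\item[(i)] \emph{Small-ball / lower isometry:} $X^\top X \succeq \tfrac12 \bar\Sigma_t$.
\item[(ii)] \emph{Self-normalized bound:} $\|W^\top X (X^\top X)^{-1/2}\|^2 \lesssim K^2(\log(1/\delta) + \dimx)$, after regularizing with $\bar\Sigma_t$ as in \cite{Jedra2020}.
\end{enumerate}
Combining (i) and (ii) gives $\|\what A_t - A\|^2 \lesssim K^2(\log(1/\delta)+\dimx)/\lambda_{\min}(\bar\Sigma_t)$. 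This yields the $1/\varepsilon^2$ branch of \eqref{eq:upper_bound_persis}.

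\textbf{Step 2 (concentration of $X^\top X$; main obstacle).} Stack the noise into $\eta = (\eta_0,\dots,\eta_{t-2})$, so that $X = $ a linear map of $\eta$. For a fixed unit vector $v$, $v^\top \bar\Sigma_t^{-1/2}X^\top X\bar\Sigma_t^{-1/2}v = \|M_v \eta\|^2$ for a block-Toeplitz matrix $M_v$ built from $A^k Q_U^{1/2}$, with $\|M_v\|_{\rm F}^2=1$. Hanson--Wright then gives deviation $\lesssim K^2(\|M_v\|\sqrt{\log} + \|M_v\|^2\log)$. The operator norm satisfies $\|M_v\|^2 \le \|\Gamma_U\|/\lambda_{\min}(\bar\Sigma_t)$, where we define $\Gamma_U$ as the block-Toeplitz operator norm, bounded by $(1+\|B\|^2\|U\|)(\sum_k\|A^k\|)^2$. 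An $\epsilon$-net over the sphere (costing $\dimx$ in the log) and a standard net argument for quadratic forms then give (i), provided $\lambda_{\min}(\bar\Sigma_t) \gtrsim K^4\|\Gamma_U\|^2(\log(1/\delta)+\dimx)$. I expect this to be the delicate step. The hardest part is getting the correct power of $\|\Gamma_U\|$ and handling the net argument uniformly; here I would mirror \cite[proof of Theorem~1]{Jedra2020}, replacing $I$ by $Q_U$.

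\textbf{Step 3 (self-normalized term and union).} For (ii), I would use the vector self-normalized martingale inequality of \cite{pena2008self}, in Abbasi-Yadkori form, with regularizer $V=\tfrac12\bar\Sigma_t$. The martingale difference is $w_s x_s^\top$, where $w_s$ is sub-Gaussian with $\psi_2$ norm $\lesssim K\|Q_U^{1/2}\|$. The resulting log-determinant term $\log\det(I + V^{-1}X^\top X)$ is $O(\dimx)$ on the event of an upper isometry $X^\top X \preceq c\,\bar\Sigma_t$, which is obtained from the same Hanson--Wright computation. A union bound over the events in (i), (ii), and the upper isometry, with $\delta$ split among them, then yields \eqref{eq:upper_bound_persis} with $C=c'K^4$.
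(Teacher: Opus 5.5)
Your architecture matches the paper's. Step~2 is the isometry lemma of Appendix~\ref{app:LSE_performance}: Hanson--Wright for $\mathrm{vec}(X^\top)=\Gamma_U\xi$ with $\|\sigma_{Mv}^\top\Gamma_U\|_{\rm F}=1$, plus a net. Steps~1 and~3 are what the paper delegates to \cite[Theorem~1]{Jedra2020}. One slip in Step~2 needs fixing. Since $M_v=\sigma_{Mv}^\top\Gamma_U$ and $\|\sigma_{Mv}\|=\|M_Uv\|$, you get $\|M_v\|^2\le\|\Gamma_U\|^2/\lambda_{\min}(\bar\Sigma_t)$, where $\Gamma_U$ is the block-Toeplitz matrix of the appendix. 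You may not redefine $\Gamma_U$: the quantity you bound by $(1+\|B\|^2\|U\|)(\sum_k\|A^k\|)^2$ is $\|\Gamma_U\|^2$, not $\|\Gamma_U\|$. This squared norm is exactly what produces the $\|\Gamma_U\|^2$ branch.

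The genuine gap is in the noise term, where Steps~1 and~3 are inconsistent. Assumption~\ref{ass:subgauss} controls only $\eta_s$, and $w_s$ is not a linear image of $\eta_s$. The best available bound is the one you state in Step~3, $\|v^\top w_s\|_{\psi_2}\lesssim K\|Q_U\|^{1/2}$. It follows, for example, from $\bbE e^{\lambda v^\top w_s}\le\bbE e^{\lambda v^\top(Bu_s+w_s)}$ via Jensen, independence, and $\bbE u_s=0$. Fed into the self-normalized inequality, this gives $\|W^\top X(X^\top X)^{-1/2}\|^2\lesssim K^2\|Q_U\|(\log(1/\delta)+\dimx)$. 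Your argument therefore proves the condition with $\max\{\|Q_U\|/\varepsilon^2,\|\Gamma_U\|^2\}$, and claim (ii) in Step~1 silently drops the factor $\|Q_U\|$.

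This loss is not an artifact of your method. Take $\dimx=\dimu=1$, $A=0$, $B=1$, $u_t\sim\calN(0,\sigma^2-1)$, and $w_t=\pm2\sigma$ with probability $1/(8\sigma^2)$ each, else $0$. Then $K=O(1)$, $\lambda_{\min}(\bar\Sigma_t)=(t-1)\sigma^2$, and $\|\Gamma_U\|^2=\sigma^2$. Choose $t-1=\lceil C(\log(1/\delta)+1)\rceil$, $\varepsilon=1/t$, and $\sigma^2=t/(40\delta)$; for small $\delta$ this satisfies \eqref{eq:upper_bound_persis}. Yet with probability about $10\delta$ exactly one $w_s$ is nonzero. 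In that case $|\what A_t-A|\approx 2\sigma|x_s|/(t\sigma^2)>\varepsilon$ with constant conditional probability, so the failure probability exceeds $\delta$.

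So the statement needs the extra, tacit hypothesis that $w_t$ itself has $\psi_2$-norm $\lesssim K$, as holds for $w_t\sim\calN(0,I)$ in the paper's example. The paper's one-line deferral to \cite{Jedra2020}, where regression noise and state noise coincide, glosses over exactly this point. Under that hypothesis your Step~3 yields a constant $K^2\le K^4$ (recall $K\ge 1$), and the rest of your plan goes through.
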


In the above proposition, the matrix $\Gamma_{U}$ satisfies $ \| \Gamma_{U} \|^2 \le  (\sum_{s=0}^{t-2} \| A^s Q_U^{1/2} \| )^2  \le \bar{\gamma} $ for any $ U \succeq 0 $ such that $ \trace(U) \le \bar{u} $.

Next, we need to relate the above condition~\eqref{eq:upper_bound_persis} to the objective $ J_{A} (U) := \lambda_{\min} \left( \Gamma_\infty (A) + \Xi_\infty (A,U) \right) $. The most crucial part in the analysis is bounding the difference between the maximum value $ J_{A} (U^*(A)) $ and $ J_{A} (U^*(\what{A}_{t_0})) $ attained by maximizing the surrogate objective $ J_{\what{A}_{t_0}} $.
The following result is the second key ingredient of the proof of Theorem \ref{thm:upper_bound}. It provides the sensitivity of the optimal solution $ U^* $ under a system perturbation bound $ \| A -\what{A} \| \le \varepsilon $. The proof is given in Appendix~\ref{app:perturbation_modi}. There, the sensitivity of the Gramians $ \Gamma_\infty $ and $ \Xi_\infty $ to perturbations plays an important role  (refer to Lemma~\ref{lem:perturbed_gramian} in Appendix \ref{app:dim_lower_bound}).
\begin{proposition}\label{prop:perturbation_infty_persis_modi}
    Let $ A $ be a stable matrix. Then, for any $ \Delta \in \bbR^{\dimx \times \dimx} $ and $ U \succeq 0 $ such that $ \| \Delta \| \le \|\Gamma_\infty (A) \|^{-3/2}/4 $ and $ \trace(U) \le \bar{u} $, it holds that
    \begin{align}
        &J_{A} (U^* (A)) -  J_{A} (U^* (A+\Delta)) \nonumber\\
        &\qquad \le 64 (1 + \| B\|^2 \bar{u}) \| \Delta \| \| \Gamma_\infty(A) \|^3 .
    \end{align}
\hfill $ \diamondsuit $
\end{proposition}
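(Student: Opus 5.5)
The plan is the standard "optimizer of a perturbed objective" argument. Write $A' := A+\Delta$, $U^* := U^*(A)$ and $\hat U := U^*(A')$. We split
\begin{align*}
J_A(U^*) - J_A(\hat U) &= \bigl(J_A(U^*) - J_{A'}(U^*)\bigr) + \bigl(J_{A'}(U^*) - J_{A'}(\hat U)\bigr) \\
&\quad + \bigl(J_{A'}(\hat U) - J_A(\hat U)\bigr).
\end{align*}
The middle term is $\le 0$ by optimality of $\hat U$ for $J_{A'}$. It therefore suffices to prove a uniform bound
\[
\sup_{U\succeq 0,\ \trace(U)\le \bar u} |J_A(U) - J_{A'}(U)| \le 32(1+\|B\|^2\bar u)\|\Delta\|\|\Gamma_\infty(A)\|^3 .
\]
Since $\lambda_{\min}$ is 1-Lipschitz in spectral norm (Weyl), this reduces to bounding $\|P_{A'}(U) - P_A(U)\|$. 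Here $P_A(U) := \Gamma_\infty(A)+\Xi_\infty(A,U)$ is the solution of $P = APA^\top + Q_U$ with $Q_U = I + BUB^\top$, and $\|Q_U\| \le 1+\|B\|^2\bar u$.

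We first need $A'$ to be stable with a controlled Gramian. The tool is the Lyapunov characterization: $\Gamma_\infty(A) =: G \succeq I$ solves $G = AGA^\top + I$. We check that $G$ is also a Lyapunov certificate for $A'$:
\[
A'GA'^\top = AGA^\top + \Delta G A^\top + A G\Delta^\top + \Delta G\Delta^\top \preceq G - I + \bigl(2\|\Delta\|\|G\|\|A\| + \|\Delta\|^2\|G\|\bigr) I .
\]
From $G \succeq AGA^\top \succeq AA^\top$ we get $\|A\| \le \|G\|^{1/2}$. Then $\|\Delta\| \le \|G\|^{-3/2}/4$ gives $2\|\Delta\|\|G\|\|A\| \le 1/2$, and the quadratic term is even smaller. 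Hence $A'GA'^\top \preceq G - \tfrac12 I$, so $A'$ is stable. Comparing with the Lyapunov series yields $\Gamma_\infty(A') \preceq 2G$, i.e.\ $\|\Gamma_\infty(A')\| \le 2\|G\|$. This is presumably the content of Lemma~\ref{lem:perturbed_gramian}, which I would invoke directly if its statement fits.

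Next we bound the Gramian perturbation. Subtracting the two Lyapunov equations gives, with $E := P_{A'}(U) - P_A(U)$,
\[
E = A'EA'^\top + \bigl(\Delta P_A A^\top + A P_A \Delta^\top + \Delta P_A \Delta^\top\bigr),
\]
so $E = \sum_k A'^k M (A'^k)^\top$ with $M$ the bracketed term. Using $\|\sum_k A'^k M A'^{k\top}\| \le \|M\|\,\|\Gamma_\infty(A')\|$ and $P_A(U) \preceq \|Q_U\|\, G$, we obtain
\[
\|E\| \le 2\|G\| \cdot \|Q_U\|\|G\|\,\bigl(2\|\Delta\|\|G\|^{1/2} + \|\Delta\|^2\bigr).
\]
Here $\|\Delta\| \le \|G\|^{-3/2}/4 \le \|G\|^{1/2}$, since $\|G\| \ge 1$. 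This gives
\[
\|E\| \le c\,(1+\|B\|^2\bar u)\|\Delta\|\,\|G\|^{5/2} \le c\,(1+\|B\|^2\bar u)\|\Delta\|\,\|G\|^{3},
\]
again using $\|G\| \ge 1$. Tracking the constants (2 from the Gramian inflation, 3 from the cross and quadratic terms, doubling from the two perturbation terms in the split) should land below 64.

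The main obstacle is the stability/Gramian-inflation step for $A'$: it must hold uniformly under exactly the radius $\|\Gamma_\infty(A)\|^{-3/2}/4$. If the direct Lyapunov-certificate argument gives a slightly worse factor, I would fall back on the resolvent-type expansion $\Gamma_\infty(A') - \Gamma_\infty(A)$ via the same fixed-point equation, and solve for $\|\Gamma_\infty(A')\|$ self-consistently as $\|G\|/(1 - 2\|\Delta\|\|G\|^{3/2} - \dots)$.
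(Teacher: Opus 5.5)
Your proof is correct. Its outer layer coincides with the paper's proof: the same three-term split, the middle term removed by optimality of $U^*(A+\Delta)$, Weyl's inequality, and a Gramian-perturbation bound that is uniform over $\{U\succeq 0,\ \trace(U)\le\bar u\}$.

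Where you differ is in how that Gramian bound is obtained. The paper takes it from Lemma~\ref{lem:perturbed_gramian}, setting $U_k\equiv U$ and letting $s\to\infty$, so the statement does fit. That lemma's proof expands $(A+\Delta)^k-A^k$ using the power bounds of \cite{Mania2019} together with $\|A^k\|\le\gamma^{k/2}(1-\gamma)^{-1/2}$, where $\gamma=1-\|\Gamma_\infty(A)\|^{-1}$, and then sums a power series. You instead use $G=\Gamma_\infty(A)$ as a Lyapunov certificate for $A+\Delta$. You then rewrite the difference of the two Lyapunov equations as a Lyapunov equation for $E$ driven by $M$.

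What your route buys:
\begin{itemize}
\item It is more elementary and sharper. It gives $\|G\|^{5/2}$ in place of $\|G\|^3$, and an overall constant below $14$ rather than $64$.
\item It extends easily. The same recursion $E_{s+1}=A'E_sA'^\top+M_s$, with $P_s(A)\preceq\sup_k\|I+BU_kB^\top\|\,G$, would also give the finite-horizon, time-varying version that the paper packages into one lemma and reuses for Theorem~\ref{thm:dim_lower_bound}.
\item It is more robust. In the lemma's series step, set $x=\|\Delta\|(1-\gamma)^{-1/2}+\gamma^{1/2}$. The second root of $1-x^2$ in $\|\Delta\|$ (the paper's $\delta'$) is $-(1+\gamma^{1/2})(1-\gamma)^{1/2}$, which lies in $(-1,0)$ when $\gamma$ is near $1$. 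So the factor $(1-\gamma)^{1/2}/(\|\Delta\|-\delta')^2$ that the paper drops is bounded only by $\|\Gamma_\infty(A)\|^{1/2}$, not by $1$. Your derivation does not depend on that step.
\end{itemize}

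There is one small arithmetic slip. You have $2\|\Delta\|\|G\|\|A\|+\|\Delta\|^2\|G\|\le\tfrac12+\tfrac1{16}$. This gives $A'GA'^\top\preceq G-\tfrac{7}{16}I$ and $\Gamma_\infty(A')\preceq\tfrac{16}{7}G$, not $G-\tfrac12 I$ and $2G$. The only effect is to raise your per-term constant from $6$ to $\tfrac{48}{7}$, which leaves the conclusion intact.
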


Now, we are ready to sketch the proof of the upper bound of the sample complexity of our algorithm. \\
\paragraph*{Proof sketch of Theorem~\ref{thm:upper_bound}}
    Let $ U_{t_0} := U^* (\bar{A}_{t_0}) $.
    The assumption $ \varepsilon_{t_0} \le \| \Gamma_\infty(A) \|^{-3/2} /4 $ ensures that $ \lambda_{\min} (\Gamma_\infty (A) + \Xi_\infty (A,U_{t_0})) $ is close enough to $ \lambda_{\min} (\Gamma_\infty (A) + \Xi_\infty (A,U^* (A))) $ with probability at least $ 1-\delta/2 $ by Proposition~\ref{prop:perturbation_infty_persis_modi}, which results in the term $ C_1  (t-1)\varepsilon_{t_0} $ in the upper bound~\eqref{eq:upper_bound_algo}.
    Moreover, by utilizing the stability of $ A $, the difference between $ \lambda_{\min} (\Gamma_\infty (A) + \Xi_\infty (A,U_{t_0})) $ and $ \lambda_{\min} \! \left( \sum_{s=1}^{t-t_0- 1} ( \Gamma_s(A) + \Xi_s (A,U_{t_0}) ) \right) $ can be bounded.
    Consequently, we arrive at
    \begin{align}
    &(t-1) \lambda_{\min} (\Gamma_\infty (A) + \Xi_\infty (A,U^* (A))) \nonumber\\
    &\quad - \lambda_{\min} \! \left( \sum_{s=1}^{t-t_0 - 1} (  \Gamma_s(A) + \Xi_s(A,U_{t_0}) ) \right) \nonumber\\
    &\le \bigl( C_1  (t-1)\varepsilon_{t_0} +  C_2  (t-1) (1-\|\Gamma_\infty(A) \|^{-1})^{t-1} \nonumber\\
    &\quad + C_2 (\|\Gamma_\infty(A) \| - 1)  + C_2 t_0  \bigr) \| \Gamma_\infty(A) \|^2 = : \bar{C} . \nonumber
    \end{align}

Therefore, if $ \| \bar{A}_{t_0} - A \| \le \varepsilon_{t_0} $ and
\begin{align}
   &(t-1) \lambda_{\min} (\Gamma_\infty (A) + \Xi_\infty (A,U^* (A))) \nonumber\\
   &\ge C \max \left\{ \frac{1}{\varepsilon^2} , \|\Gamma_{U_{t_0}} \|^2 \right\} \left( \log \left( \frac{2}{\delta}\right) + \dimx  \right) + \bar{C}, \nonumber
\end{align}
condition~\eqref{eq:upper_bound_persis} with $ t \leftarrow t-t_0 $, $ U = U_{t_0} $, and $ \delta \leftarrow \delta/2 $ holds.
Then, it follows from Proposition~\ref{prop:LSE_performance_persis} that $ \bbP\! \left( \| \what{A}_t - A \| \le \varepsilon  \  \middle| \ \| \bar{A}_{t_0} - A \| \le \varepsilon_{t_0}  \right) \ge 1 -\delta/2 $.
Since $ \bbP ( \|  \bar{A}_{t_0} - A \| \le \varepsilon_{t_0} ) \ge  1- \delta/2  $, it holds that $ \bbP( \| \what{A}_t - A \| \le \varepsilon  ) \ge 1 -\delta $, which concludes the proof.

\section{Numerical Experiment}\label{sec:example}

We briefly examine the sample complexity of Algorithm~\ref{alg:learning} via a numerical example. In this section, the process noise is given by $ w_t \sim \calN(0,\sigma_w^2 I) $, where $ \sigma_w > 0 $.
Note that by a change of variables $ x'_t = \sigma_w^{-1} x_t $, we obtain $ x'_{t+1} = A x'_t + \sigma_w^{-1} Bu_t + w'_t $, where $ w'_t \sim \calN(0,I) $. Hence, the obtained bounds apply to this system. In the experiment, we omit resetting the state in Algorithm~\ref{alg:learning}.
Fig.~\ref{fig:sample_complexity} illustrates the relationship between the number of samples $ t $ and the estimation error $ \| A - \what{A}_t \| $ for Algorithm~\ref{alg:learning} (red), playing $ u_t \sim \calN(0,(\bar{u}/\dimx) I) $ for all $ t \ge 0 $ (yellow), and playing the oracle noise input $ u_t \sim \calN(0,U^* (A)) $ (blue), where $ U^*(A) $ is obtained by solving \eqref{prob:max_Aid_SDP_infty} with the knowledge of the true parameter $ A $. We use a Jordan block as $ A $, which is a typical example where a properly designed excitation leads to a large improvement over the isotropic noise~\cite{Wagenmaker2020}. The solid line represents the mean, while the shaded area indicates the 10th to 90th percentile range across $ 300 $ trials. 
The initial horizon is set to $ t_0 = 850 \simeq 25000^{2/3} $ such that the residual term in the upper bound \eqref{eq:upper_bound_algo} is minimized for $ t = 25000 $.
As can be seen, after the initial horizon, the error of Algorithm~\ref{alg:learning} rapidly approaches that of the oracle method, and at time $ t = 25000 $, the mean and percentiles of Algorithm~\ref{alg:learning} closely match those of the oracle method.
The numerical simulation was conducted on a MacBook Pro equipped with an Apple M3 Max chip and implemented in MATLAB R2024a, utilizing the CVX package for convex optimization~\cite{cvx}.

\begin{figure}[t]
    \centering
      \includegraphics[width=0.65\linewidth]{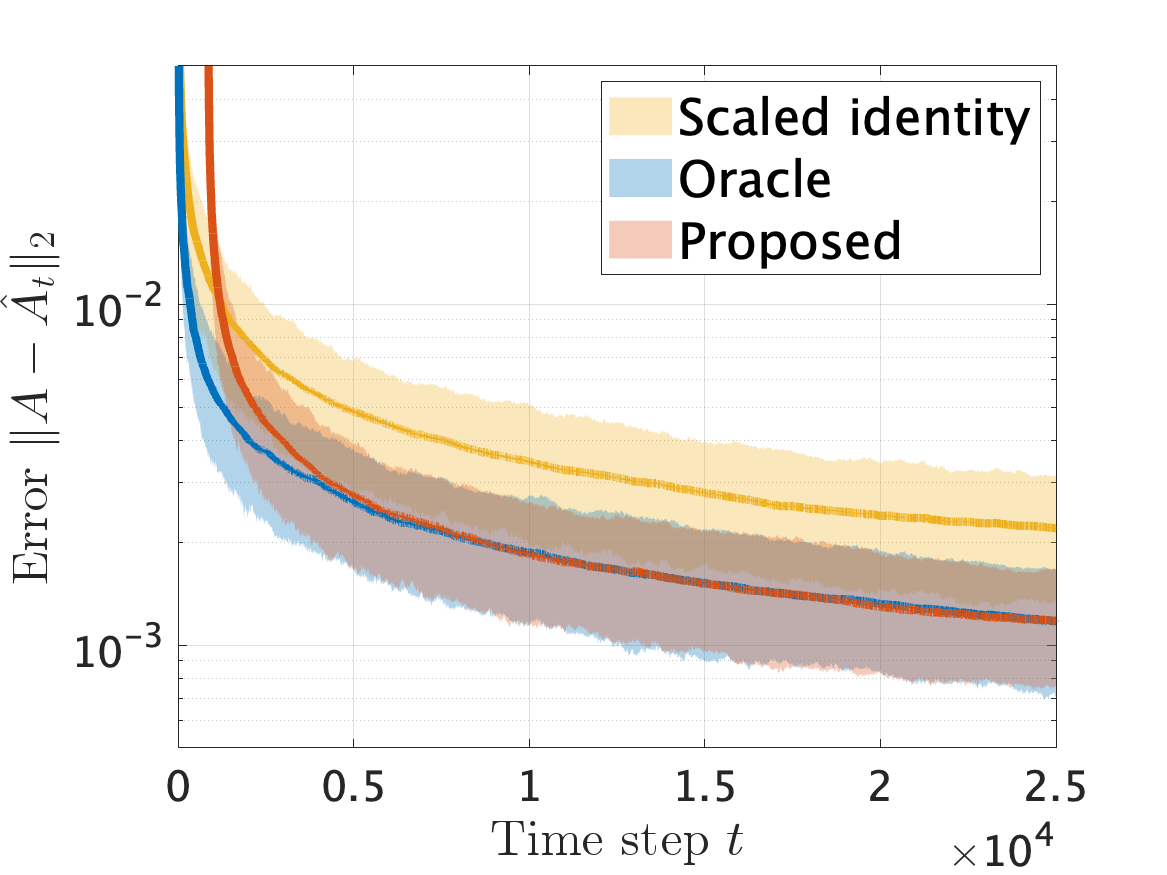}  
   \caption{Number of samples and the estimation error, where $ \dimx = 4 $, $ B = I $, $ \bar{u} = 1 $, $ \sigma_w = 0.1 $, and $ A $ is the Jordan block with diagonal entry $ 0.8 $.}
   \label{fig:sample_complexity}
\end{figure}

\section{Conclusion}\label{sec:conclusion}

In this paper, we studied the problem of active optimal excitation input design for linear system identification. We derived sample complexity lower bounds that apply to any algorithm employing zero-mean (centered) inputs, including widely used noise-based excitation strategies. Building on these insights, we proposed a computationally efficient algorithm inspired by the lower bounds and provided a matching sample complexity guarantee. This guarantee features explicit and interpretable dependence on the accuracy and confidence parameters $(\varepsilon, \delta)$ as well as key system parameters.

This work opens up several directions for future research. One promising avenue is the fixed-confidence setting, where the algorithm continues sampling until it can confidently return an estimate with a desired level of accuracy—this has been explored without excitation in \cite{Jedra2023}. Another direction is to design excitation strategies tailored to specific downstream tasks, aiming to gather only the information necessary to complete the task as efficiently as possible—a setting relevant to reinforcement learning (see \cite{Wagenmaker2021task} for example). Finally, an important remaining challenge is to remove the assumption of state resets in our upper bound analysis.


\bibliographystyle{ieeepes}
\bibliography{opt_sysid_cdc_arxiv}


\appendices

\section{Proof of Proposition~\ref{prop:lower_bound_persis}}\label{app:lower_bound}

The proof follows a similar argument as in \cite[Theorem~1]{Jedra2023}. 
Since $ w_t $ follows a non-degenerate Gaussian distribution, the conditional density function $ f_A^{x_{s+1}} ( \cdot | x_s, u_s) $ of $ x_{s+1} $ given $ x_s $ and $ u_s $ under a system matrix $ A $ exists. 
Since we suppose that the control policy satisfies \eqref{eq:density}, $ \{x_1,\ldots,x_t,u_0,\ldots,u_{t-1}\} $ under $ A $ has a joint density function
\begin{align*}
    &f_A(x_1,\ldots,x_t,u_0,\ldots,u_{t-1}) \\
    &= \prod_{s=0}^{t-1} f_A^{x_{s+1}} (x_{s+1} | x_s, u_s) \pi_s (u_s |x_1,\ldots,x_s,u_0,\ldots,u_{s-1} ) .
\end{align*}
Define the log-likelihood ratio as follows:
\begin{equation*}
   L_t := \log \! \left( \frac{f_A (x_1,\ldots,x_t,u_0,\ldots,u_{t-1})}{f_{A'} (x_1,\ldots,x_t,u_0,\ldots,u_{t-1})} \right) .
\end{equation*}
By using the Markovian nature of the dynamics, the expectation $ \bbE_A [L_t] $ under $ A $ can be written as
\begin{align}
   &\bbE_A [L_t] =  \bbE_A \! \left[ \sum_{s=1}^{t-1} D_{\rm KL} \! \left(f_A^{x_{s+1}} (\cdot | x_s, u_t) \| f_{A'}^{x_{s+1}} (\cdot | x_s, u_t) \right)  \right] \nonumber\\
   &= \bbE_A \! \left[ \sum_{s=1}^{t-1} D_{\rm KL} \! \left( \calN (Ax_s + Bu_s , I) \| \calN (A' x_s + Bu_s , I) \right)  \right] \nonumber\\
   &= \bbE_A \! \left[ \sum_{s=1}^{t-1} \frac{1}{2} x_s^\top (A - A')^\top (A - A') x_s   \right] \nonumber\\
   &= \frac{1}{2} \trace \! \left( (A - A')^\top (A - A') \sum_{s=1}^{t-1} \bbE_A \! \left[x_s x_s^\top \right] \right) , \nonumber
\end{align}
where $ D_{\rm KL}(f_1 \| f_2) $ denotes the KL divergence between distributions $ f_1 $ and $ f_2 $. Then, by the same arguments as in the proofs of \cite[Proposition~1, Theorem~1]{Jedra2023} using the data processing inequality, for any $ A'\neq A $ such that $ 2\varepsilon \le \| A - A' \| \le 6\sqrt{2} \varepsilon $, we obtain
\begin{align*}
   &\trace \! \left( (A - A')^\top (A - A') \sum_{s=1}^{t-1} \bbE_A \! \left[x_s x_s^\top \right]  \right) \\
   &\ge 2 \sup_{\calE \in \calF_t} {\rm kl} \! \left( \bbP_A(\calE), \bbP_{A'} (\calE) \right) \ge 2 \log \! \left( \frac{1}{3\delta} \right) ,\nonumber
\end{align*}
where the last inequality is obtained by considering the event $ \calE = \{ \| \what{A}_t - A \| \le \varepsilon \} \in \calF_t $.
The minimization of the left-hand side by varying $ A' $ under the constraint $ 2\varepsilon \le \| A - A' \| \le 6\sqrt{2} \varepsilon $ yields the tightest bound:
\begin{equation}
   4\varepsilon^2  \lambda_{\min} \! \left(\sum_{s=1}^{t-1} \bbE_A [x_s x_s^\top] \right) \ge 2 \log \! \left( \frac{1}{3\delta} \right) . \nonumber
\end{equation}

The evolution equation~\eqref{eq:evolution_sigma} can be obtained as follows. 
The covariance $ \Sigma_s := \bbE_A [x_s x_s^\top] $ satisfies
\begin{align}
   \Sigma_{s+1} &= A \Sigma_s A^\top + A \bbE_A [x_s u_s^\top] B^\top + B\bbE_A [u_s x_s^\top] A^\top \nonumber\\
   &\quad + B U_s B^\top + I , \label{eq:evolution2}
\end{align}
where $ U_s := \bbE_A [u_s u_s^\top] $.
By the assumption that $ \bbE[u_s | \calF_{s} ] = 0 $ and the tower property, we have
\begin{align*}
   \bbE_A [x_s u_s^\top] = \bbE_A \! \left[ \bbE_A [ x_s u_s^\top | \calF_{s} ] \right] = \bbE_A \! \left[x_s \bbE_A [u_s^\top | \calF_{s}] \right] = 0.
\end{align*}
By substituting this into \eqref{eq:evolution2}, we obtain the desired result.

\section{Proof of Theorem~\ref{thm:dim_lower_bound}}\label{app:dim_lower_bound}

For the proof of Theorem~\ref{thm:dim_lower_bound}, we utilize the following proposition~\cite[Proposition~2]{Jedra2023}.
\begin{proposition}\label{prop:bound_for_lower_bound}
    Suppose that the assumptions in Proposition~\ref{prop:lower_bound_persis} are satisfied.
    Let $ \calE_1, \calE_2,\ldots,\calE_n $ be pairwise disjoint events, which belong to $ \calF_t $. Then, for any $ A_1,A_2,\ldots,A_n \in \bbR^{\dimx \times \dimx } $ such that $ A_i \neq A $ for all $ i\in \{1,\ldots,n\} $, it holds that for any $ t \ge 1 $,
    \begin{align}
        &\frac{1}{2n} \sum_{i=1}^n \trace \! \left( (A_i - A)^\top (A_i - A) \sum_{s=1}^{t-1} \bbE_{A_i} \!\left[x_s x_s^\top \right] \right) \nonumber\\
        &\ge {\rm kl} \! \left( \frac{1}{n} \sum_{i=1}^n \bbP_{A_i} (\calE_i) , \frac{\bbP_A (\cup_{i=1}^n \calE_i)}{n}  \right) ,
    \end{align}
    where $ {\rm kl} (x,y) := x \log (x/y) + (1-x) \log ((1-x)/(1-y)) $ denotes the KL divergence between two Bernoulli distributions with mean $ x $ and $ y $, respectively.
    \hfill $ \diamondsuit $
\end{proposition}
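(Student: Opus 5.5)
The statement immediately above is Proposition~\ref{prop:bound_for_lower_bound}, quoted from \cite[Proposition~2]{Jedra2023}. I would prove it by a change of measure from each alternative $A_i$ to $A$, combined with the data processing inequality applied to a mixture. The steps are: compute the per-alternative log-likelihood ratio, average over $i$, pass to the mixture via convexity of KL, and then apply data processing with a single well-chosen event.

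First, fix $i$. Exactly as in Appendix~\ref{app:lower_bound}, the joint density of $(x_1,\ldots,x_t,u_0,\ldots,u_{t-1})$ under $A_i$ factorizes into Gaussian transition densities times the policy densities $\pi_s$. The policy densities are common to both models and cancel in the ratio. Hence the expected log-likelihood ratio is
\begin{equation*}
D_{\rm KL}(\bbP_{A_i}|_{\calF_t}\,\|\,\bbP_{A}|_{\calF_t}) = \tfrac12 \trace\!\left((A_i-A)^\top(A_i-A)\textstyle\sum_{s=1}^{t-1}\bbE_{A_i}[x_sx_s^\top]\right),
\end{equation*}
where the expectation is now taken under $A_i$. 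The $s=0$ term vanishes because $x_0=0$. The left-hand side of the claim is therefore $\frac1n\sum_i D_{\rm KL}(\bbP_{A_i}\|\bbP_A)$.

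Second, I would use convexity of KL in the pair. The trick is to replace $\bbP_A$ by a reference against which the mixture is compared. Consider an extended space with an index $I$ uniform on $\{1,\ldots,n\}$. Under the first measure $\mathbb Q$, draw $I$ and then the trajectory from $\bbP_{A_I}$. Under the second measure $\mathbb Q'$, draw $I$ uniformly and independently of a trajectory from $\bbP_A$. By the chain rule, $D_{\rm KL}(\mathbb Q\|\mathbb Q')=\frac1n\sum_i D_{\rm KL}(\bbP_{A_i}\|\bbP_A)$.

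Third, I would apply data processing to the single event $\calE:=\bigcup_i(\{I=i\}\cap\calE_i)$. Then $\mathbb Q(\calE)=\frac1n\sum_i\bbP_{A_i}(\calE_i)$. By independence under $\mathbb Q'$ and disjointness of the $\calE_i$, $\mathbb Q'(\calE)=\frac1n\sum_i\bbP_A(\calE_i)=\bbP_A(\cup_i\calE_i)/n$. Data processing gives $D_{\rm KL}(\mathbb Q\|\mathbb Q')\ge{\rm kl}(\mathbb Q(\calE),\mathbb Q'(\calE))$, which is the claim.

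The main obstacle is constructing the auxiliary index variable so that the second argument comes out exactly as $\bbP_A(\cup\calE_i)/n$. Disjointness is what makes this work. The likelihood computation also needs care: it must be done under $\bbP_{A_i}$, which is why the expectation sits under $A_i$, and one must note that the cancellation of $\pi_s$ uses the fact that the same policy is run under both models.
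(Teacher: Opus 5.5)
Your proof is correct. The likelihood computation under $\bbP_{A_i}$ identifies the left-hand side as $\frac{1}{n}\sum_i D_{\rm KL}(\bbP_{A_i}\|\bbP_A)$, and a single data-processing step on the extended space with the uniform index gives the stated ${\rm kl}$ bound, with disjointness needed only to write $\sum_i \bbP_A(\calE_i)=\bbP_A(\cup_i\calE_i)$. The paper does not prove this statement itself; it imports it from \cite[Proposition~2]{Jedra2023}, and the standard route there is per-alternative data processing followed by joint convexity of ${\rm kl}$, so your auxiliary-index construction is that convexity step folded into one application of data processing and the argument is essentially the same.
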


The following sensitivity result of the Gramians under a system perturbation is also instrumental for proving Theorem~\ref{thm:dim_lower_bound}. Moreover, this result plays a crucial role in analyzing the sensitivity of the optimal solution $ U $ to the maximization of $ \lambda_{\min} ( \Gamma_\infty (A) + \Xi_\infty (A, U) ) $; see Proposition~\ref{prop:perturbation_infty_persis_modi}. 
\begin{lemma}[Perturbed Gramians]\label{lem:perturbed_gramian}
    Let $ A $ be a stable matrix. Then, for any $ \Delta \in \bbR^{\dimx \times \dimx} $ and $ \{U_k\} $ such that $ \| \Delta \| \le \| \Gamma_\infty (A) \|^{-3/2} / 4 $ and $ \trace(U_k) \le \bar{u} $ for any $ k \ge 0 $, it holds that for any $ s \ge 0 $,
    \begin{align}
        &\| \Gamma_s (A + \Delta) + \Xi_s (A + \Delta,\{U_k\}) \nonumber\\
        &\quad - (\Gamma_s (A) + \Xi_s (A,\{U_k\})) \| \nonumber\\
        &\le 32 (1 + \|B\|^2 \bar{u}) \|\Delta \| \| \Gamma_\infty (A) \|^3 ,\label{eq:perturbed_gramian}
    \end{align}
    where $ \Xi_s (A,\{U_k\}) := \sum_{k=0}^{s-1} A^{s-1-k} BU_k B^\top (A^{s-1-k})^\top $.
    In addition, $ A + \Delta $ is a stable matrix.
    \hfill $ \diamondsuit $
\end{lemma}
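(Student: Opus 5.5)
The statement to prove is Lemma~\ref{lem:perturbed_gramian}. The plan is to split the Gramian difference into terms driven by the perturbation of the powers $(A+\Delta)^k - A^k$, and to control every power of $A$ through $\|\Gamma_\infty(A)\|$.

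\textbf{Step 1: a Lyapunov-type power bound.} Write $P := \Gamma_\infty(A)$, so that $P = APA^\top + I$ and $P \succeq I$. Then
\[
A P A^\top = P - I \preceq \bigl(1 - \|P\|^{-1}\bigr) P .
\]
Iterating gives $A^k P (A^k)^\top \preceq (1-\|P\|^{-1})^k P$. Since $P \succeq I$, this yields
\[
\|A^k\| \le \sqrt{\|P\|}\,\bigl(1-\|P\|^{-1}\bigr)^{k/2} .
\]
Let $\gamma := \|P\|$ and $\rho_0 := \sqrt{1-\gamma^{-1}}$. Using $1-\sqrt{1-x} \ge x/2$, we get
\[
\sum_{k\ge 0} \|A^k\| \le \frac{\sqrt{\gamma}}{1-\rho_0} \le 2\gamma^{3/2} .
\]

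\textbf{Step 2: bounds for the perturbed powers.} Expand by the telescoping identity
\[
(A+\Delta)^k - A^k = \sum_{j=0}^{k-1} (A+\Delta)^{j}\, \Delta\, A^{k-1-j}.
\]
Cleaner is the weighted-norm argument: in the norm induced by $P^{-1/2}$, one has $\|P^{-1/2} A P^{1/2}\| \le \rho_0$. Hence
\[
\|P^{-1/2}(A+\Delta)P^{1/2}\| \le \rho_0 + \sqrt{\gamma}\,\|\Delta\| .
\]
By Step 1, $1-\rho_0 \ge 1/(2\gamma)$. Under $\|\Delta\| \le \gamma^{-3/2}/4$ we have $\sqrt{\gamma}\,\|\Delta\| \le 1/(4\gamma)$, so
\[
\rho_1 := \rho_0 + \sqrt{\gamma}\,\|\Delta\| \le 1 - \frac{1}{4\gamma} < 1 .
\]
This proves stability of $A+\Delta$, and also $\|(A+\Delta)^k\| \le \sqrt{\gamma}\,\rho_1^k$. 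The telescoping sum then gives
\[
\|(A+\Delta)^k - A^k\| \le \gamma\, k\, \rho_1^{k-1} \|\Delta\| .
\]

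\textbf{Step 3: assembling the difference.} For a fixed $k$ and matrix $M \succeq 0$,
\[
\|(A+\Delta)^k M (A+\Delta)^{k\top} - A^k M A^{k\top}\| \le \|M\|\,\|D_k\|\bigl(\|(A+\Delta)^k\| + \|A^k\|\bigr),
\]
where $D_k := (A+\Delta)^k - A^k$. Apply this with $M = I$ for the $\Gamma_s$ terms and $M = BU_kB^\top$ for the $\Xi_s$ terms, using $\|BU_kB^\top\| \le \|B\|^2 \operatorname{tr}(U_k) \le \|B\|^2\bar u$. Summing over $k$ up to $s-1$ (bounded by the infinite sum) gives
\[
\bigl(1+\|B\|^2\bar u\bigr)\,\|\Delta\| \sum_{k} 2\gamma^{3/2}\, k\, \rho_1^{2k-1}.
\]
The remaining series is bounded by
\[
\sum_k k \rho_1^{2k-1} \le \frac{1}{(1-\rho_1^2)^2} \le \frac{1}{(1-\rho_1)^2} \le 16\gamma^2 ,
\]
so the whole term is of order $\gamma^{3.5}\|\Delta\|$.

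\textbf{Main obstacle.} The crude bound of Step 3 overshoots the target $\gamma^3$ by a factor $\sqrt{\gamma}$. The constants must be tightened to reach $32(1+\|B\|^2\bar u)\|\Delta\|\gamma^3$. The fix I would try first is to avoid the operator-norm conversion altogether and work in the $P$-weighted geometry throughout. Write the Gramian difference as the solution of a perturbed Lyapunov recursion,
\[
E_{s+1} = A E_s A^\top + (\text{terms in } \Delta) .
\]
Bound the increments by $\|\Delta\|\,\|A\|\,\|G_s\|$-type quantities, using $\|G_s\| \le (1+\|B\|^2\bar u)\gamma$-type bounds on the perturbed Gramians themselves. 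Then sum with $\sum_k \|A^k (A^k)^\top\| \le \gamma$. Heuristically this yields roughly $\gamma \cdot \|\Delta\| \cdot \gamma \cdot \gamma$, i.e.\ the desired $\gamma^3$ order. The genuinely delicate part is showing that the perturbed Gramian norm stays $O(\gamma)$ under the stated radius $\gamma^{-3/2}/4$, which I expect to get from the contraction estimate $\rho_1 \le 1-1/(4\gamma)$ of Step 2.
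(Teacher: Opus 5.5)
Your Steps 1--3 are correct, and they are in substance the paper's own argument. The Mania et al.\ estimates the paper invokes, with $\tau(A,\rho)\le\|\Gamma_\infty(A)\|^{1/2}$ and $\rho=\rho_0$, are exactly your bounds $\|(A+\Delta)^k\|\le\sqrt{\gamma}\,\rho_1^k$ and $\|(A+\Delta)^k-A^k\|\le\gamma k\rho_1^{k-1}\|\Delta\|$ (with $\gamma=\|\Gamma_\infty(A)\|$ as in your notation). Step 2 also settles the stability claim. But this route only gives $32(1+\|B\|^2\bar{u})\|\Delta\|\gamma^{7/2}$, so the quantitative bound is not proved. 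Your fix, as specified, would not repair it:
\begin{itemize}
\item the contraction factor yields only $\|\Gamma_\infty(A+\Delta)\|\le\gamma/(1-\rho_1^2)\le 4\gamma^2$, not $O(\gamma)$;
\item fed into your bookkeeping (increments $\|\Delta\|\,\|A\|\,\|G_s\|$ with $\|A\|\le\sqrt{\gamma}$, propagated by $A$), this returns $\gamma^{7/2}$;
\item $\sum_k\|A^k(A^k)^\top\|\le\gamma$ is false in general, since the triangle inequality goes the other way. What holds is the Loewner bound $-\|F\|\,\Gamma_\infty(A)\preceq\sum_k A^kF(A^k)^\top\preceq\|F\|\,\Gamma_\infty(A)$ for symmetric $F$.
\end{itemize}
Your diagnosis of the $\sqrt{\gamma}$ overshoot is right, and it is not an artifact of loose constants. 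The paper reaches the exponent $3$ only by asserting $(\|\Delta\|-\delta')^2\ge 1$ with $\delta'<-1$. However, the second root of $1-\rho_1^2$, viewed as a polynomial in $\|\Delta\|$, is $-(1+\rho_0)\gamma^{-1/2}$, of modulus at most $2\gamma^{-1/2}$. With the correct root, the paper's chain also ends at $\gamma^{7/2}$.

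The missing idea is to keep the matrix Lyapunov inequality rather than collapsing it to the scalar rate $\rho_1$. Let $P=\Gamma_\infty(A)$ and $A_\Delta:=A+\Delta$, and note $\|PA^\top\|\le\|P\|^{1/2}\|APA^\top\|^{1/2}\le\gamma$. Then
\[
A_\Delta P A_\Delta^\top = P-I+\Delta PA^\top+AP\Delta^\top+\Delta P\Delta^\top\preceq P-\tfrac{7}{16}I ,
\]
because $2\gamma\|\Delta\|+\gamma\|\Delta\|^2\le\tfrac{1}{2}+\tfrac{1}{16}$ under the stated radius. Telescoping gives $\Gamma_\infty(A_\Delta)\preceq\tfrac{16}{7}P$, i.e.\ an $O(\gamma)$ perturbed Gramian. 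Now let $S_s:=\Gamma_s(A)+\Xi_s(A,\{U_k\})\preceq cP$ with $c:=1+\|B\|^2\bar{u}$. The difference $E_s$ of the two Gramians satisfies $E_0=0$ and $E_{s+1}=A_\Delta E_sA_\Delta^\top+F_s$, where $F_s:=\Delta S_sA^\top+AS_s\Delta^\top+\Delta S_s\Delta^\top$. Hence $\|F_s\|\le\tfrac{9}{4}c\gamma^{3/2}\|\Delta\|$. The Loewner bound, with $A_\Delta$ in place of $A$, then gives
\[
\|E_s\|\le\max_j\|F_j\|\,\|\Gamma_\infty(A_\Delta)\|\le\tfrac{36}{7}c\gamma^{5/2}\|\Delta\| ,
\]
which is below the target $32c\gamma^3\|\Delta\|$. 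Alternatively, you can keep your $4\gamma^2$ bound if you sharpen the forcing via $\|S_sA^\top\|\le\|S_s\|^{1/2}\|AS_sA^\top\|^{1/2}\le c\gamma$; this gives $9c\gamma^3\|\Delta\|$.
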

\begin{proof}
    The left-hand side of \eqref{eq:perturbed_gramian} is bounded as
    \begin{align}
        &\| \Gamma_s (A + \Delta) + \Xi_s (A + \Delta,\{U_k\}) \nonumber\\
        &\quad - \left(\Gamma_s (A) + \Xi_s (A,\{U_k\}) \right) \| \nonumber\\
        &\le \| \Gamma_s (A + \Delta) - \Gamma_s (A)   \| \nonumber\\
        &\quad + \| \Xi_s (A + \Delta,\{U_k\}) - \Xi_s (A,\{U_k\})  \| .\nonumber
    \end{align}
    The second term can be bounded as
    \begin{align}
        &\| \Xi_s (A + \Delta,\{U_k\}) - \Xi_s (A,\{U_k\})  \|  \nonumber\\
        &\le \sum_{k=0}^{s-1} \Bigl( \Bigl\| (A + \Delta)^{s-1-k} BU_k B^\top \nonumber\\
        &\quad \times \left( (A+\Delta)^{s-1-k} - A^{s-1-k}  \right)^\top \Bigr\| \nonumber\\
        &\quad + \left\| \left( (A+\Delta)^{s-1-k} - A^{s-1-k}  \right)   B U_k B^\top (A^{s-1-k})^\top    \right\| \Bigr) . \nonumber
    \end{align}

    By \cite[Lemma~5]{Mania2019}, for any $ k \ge 1 $ and $ \rho \ge \rho(A) $, we have
    \begin{align}
        \| (A + \Delta)^k \| &\le \tau (A,\rho) \left( \tau (A,\rho) \| \Delta \| + \rho  \right)^k , \nonumber\\
        \| (A + \Delta)^k - A^k \| &\le k \tau(A,\rho)^2 (\tau (A,\rho) \| \Delta \| + \rho)^{k-1} \| \Delta \| ,\nonumber
    \end{align}
    where $ \tau (A,\rho) := \sup \{ \|A^k \| \rho^{-k} : k \ge 0 \} $. In addition, for $ \tau (A,\rho) $, we have
    \begin{equation*}
        \| A^k \| \le \frac{\gamma^{k/2}}{(1-\gamma)^{1/2}}, ~~ \forall k\ge 1 ,
    \end{equation*}
    where $ \gamma :=  1 - \| \Gamma_\infty (A) \|^{-1} \in (0,1) $ and $ \gamma^{1/2} \ge  \rho (A) $.
    Therefore, by choosing $ \rho = \gamma^{1/2} $, $ \tau(A,\rho) $ is upper bounded as $ \tau(A,\gamma^{1/2}) \le 1/(1-\gamma)^{1/2} $, and
    \begin{align}
        &\| \Xi_s (A + \Delta,\{U_k\}) - \Xi_s (A,\{U_k\})  \| \nonumber\\
        &\le \sum_{k=0}^{s-1} 2 (s-1-k) \frac{\|B\|^2 \bar{u}}{(1-\gamma)^{3/2}} \| \Delta \| \nonumber\\
        &\quad \times \left( \frac{\|\Delta \|}{(1-\gamma)^{1/2}} + \gamma^{1/2} \right)^{2(s-k-1) -1}\nonumber\\
        &= \frac{2\|B\|^2 \bar{u} \| \Delta \|}{(1-\gamma)^{3/2}} \sum_{k=0}^{s-1}  k \left( \frac{\|\Delta \|}{(1-\gamma)^{1/2}} + \gamma^{1/2} \right)^{2k-1} . \nonumber
    \end{align}

    If $ \| \Delta \| < \frac{(1-\gamma)^{3/2}}{1+\gamma^{1/2}} $, that is, $ \frac{\|\Delta \|}{(1-\gamma)^{1/2}} + \gamma^{1/2} < 1 $, then we have
    \begin{align}
        &\| \Xi_s (A + \Delta,\{U_k\}) - \Xi_s (A,\{U_k\})  \| \nonumber\\
        &\le \frac{2\|B\|^2 \bar{u} \| \Delta \|}{(1-\gamma)^{3/2}} \frac{\frac{\|\Delta \|}{(1-\gamma)^{1/2}} + \gamma^{1/2}}{\left(1- \left(\frac{\|\Delta \|}{(1-\gamma)^{1/2}} + \gamma^{1/2} \right)^2\right)^2} \nonumber\\
        &\le 2\|B\|^2 \bar{u} \| \Delta \| \frac{(1-\gamma)^{1/2}}{ \left( \|\Delta \| - \frac{(1-\gamma)^{3/2}}{1+\gamma^{1/2}} \right)^2 (\|\Delta \| - \delta' )^2 } ,\nonumber
    \end{align}
    where $ \delta' := \frac{-\gamma^{1/2} - \sqrt{\gamma + (1-\gamma)^5}}{(1-\gamma)^{3/2}} < -1 $. 
    Note that $ \| \Delta \| < \frac{(1-\gamma)^{3/2}}{1+\gamma^{1/2}} $ is satisfied if $ \| \Delta \| \le \frac{1}{4} \| \Gamma_\infty(A) \|^{-3/2} $. Therefore, 
    \begin{align}
        &\| \Xi_s (A + \Delta,\{U_k\}) - \Xi_s (A,\{U_k\})  \| 
        \nonumber\\
        &\le 2\|B\|^2 \bar{u}  \frac{\| \Delta \|}{ \left( \|\Delta \| - \frac{(1-\gamma)^{3/2}}{1+\gamma^{1/2}} \right)^2 } . \nonumber
    \end{align}
    Moreover, since we assume $ \| \Delta \| \le \frac{1}{4} \| \Gamma_\infty(A) \|^{-3/2} $, we have
    \begin{align}
        \left( \|\Delta \| - \frac{(1-\gamma)^{3/2}}{1+\gamma^{1/2}} \right)^2 &\ge \left( \|\Delta \| - \frac{\|\Gamma_\infty(A)\|^{-3/2}}{2}  \right)^2  \nonumber\\
        &\ge \frac{\|\Gamma_\infty (A) \|^{-3}}{16}, \nonumber
    \end{align}
    which means
    \begin{align}
        &\| \Xi_s (A + \Delta,\{U_k\}) - \Xi_s (A,\{U_k\})  \| \nonumber\\
        &\le 32 \|B\|^2 \bar{u}  \|\Delta \| \| \Gamma_\infty (A) \|^3 .\label{eq:W_perturbed_bound}
    \end{align}
    By the same argument as for \eqref{eq:W_perturbed_bound}, we obtain
    \begin{equation*}
        \| \Gamma_s (A + \Delta) - \Gamma_s (A)  \| \le 32  \|\Delta \| \| \Gamma_\infty (A) \|^3 .
    \end{equation*}

    Lastly, the stability of $ A + \Delta $ is concluded from $ \| (A + \Delta)^k \| \le \left(\frac{\|\Delta\|}{(1-\gamma)^{1/2}} + \gamma^{1/2} \right)^k/(1-\gamma)^{1/2} $ for any $ k \ge 1 $, where $ \frac{\|\Delta \|}{(1-\gamma)^{1/2}} + \gamma^{1/2} \in (0,1) $ holds if $ \| \Delta \| \le \frac{1}{4} \| \Gamma_\infty(A) \|^{-3/2} $.
\end{proof}

Define $ \calC_\zeta^* := \{ A + (2\varepsilon + \zeta) v u^\top : v \in S^{\dimx -1} \} $, where $ u \in S^{\dimx - 1} $ is the normalized eigenvector corresponding to the smallest eigenvalue of $ \sum_{s=0}^{t-1} \bbE_A [x_s x_s^\top] $, and $ S^{\dimx -1} $ denotes the unit sphere in $ \bbR^{\dimx} $.
Next, we construct an appropriate packing of $ \calC_\zeta^* $~\cite{Vershynin2018}.
A subset $ \calP \subseteq \calC_\zeta^* $ is called an $ \epsilon $-packing of $ \calC_\zeta^* $ if $ \| A_1 - A_2 \| > \epsilon $ for all $ A_1, A_2 \in \calP $ such that $ A_1 \neq A_2 $.
Then, there exists a packing satisfying the following property, which is a modified version of \cite[Lemma~2, Proposition~3]{Jedra2023}.
\begin{proposition}[Packing of $ \calC_\zeta^* $]\label{prop:packing}
    For $ \zeta := (3\sqrt{2} - 2) \varepsilon $ and for any $A'\in \calC_\zeta^* $, we have
    \begin{align}
        \trace\left( (A'- A)^\top (A'- A) G_t (A)  \right) &= 18 \varepsilon^2 \lambda_{\min} \left( G_t (A)  \right) ,\nonumber
    \end{align}
    where $ G_t (A) := \sum_{s=0}^{t-1} \bbE_{A} [x_s x_s^\top] $.
    Moreover, there exists a packing $ \calP \subseteq \calC_\zeta^* $ such that for all $ A' \in \calP $, $ \|A' - A \| = \|A' - A \|_{\rm F} = 3\sqrt{2}\varepsilon $, and for all $ A_1, A_2 \in \calP $ such that $ A_1 \neq A_2 $, we have $ 3(\sqrt{3}-1) \varepsilon \le \| A_1 - A_2 \| < 6\sqrt{2}\varepsilon $. Furthermore, it holds that
    \begin{align*}
        |\calP| \ge \left( \frac{2}{\sqrt{3}}  \right)^{\dimx (1+o(1))} .
    \end{align*}
    \hfill $ \diamondsuit $
\end{proposition}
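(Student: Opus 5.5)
The statement to prove is Proposition~\ref{prop:packing}, which has three parts: an exact trace identity, the geometry of a packing, and a lower bound on its cardinality. I will handle them in that order.

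\textbf{Trace identity.} Take any $A' = A + (2\varepsilon+\zeta) v u^\top \in \calC_\zeta^*$. With $\zeta = (3\sqrt2-2)\varepsilon$ we have $2\varepsilon+\zeta = 3\sqrt2\varepsilon$. Hence
\[
(A'-A)^\top(A'-A) = 18\varepsilon^2\, u v^\top v u^\top = 18\varepsilon^2\, u u^\top ,
\]
using $\|v\|=1$. Then $\trace(18\varepsilon^2 u u^\top G_t(A)) = 18\varepsilon^2\, u^\top G_t(A) u = 18\varepsilon^2 \lambda_{\min}(G_t(A))$, because $u$ is the unit eigenvector for the smallest eigenvalue.

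\textbf{Norms.} The same computation gives $\|A'-A\| = \|A'-A\|_{\rm F} = 3\sqrt2\varepsilon$, since $A'-A$ is rank one and $\|vu^\top\| = \|vu^\top\|_{\rm F} = 1$. For two elements $A_i = A + 3\sqrt2\varepsilon\, v_i u^\top$, $i=1,2$, we get $A_1 - A_2 = 3\sqrt2\varepsilon (v_1 - v_2) u^\top$. So $\|A_1-A_2\| = 3\sqrt2\varepsilon\|v_1-v_2\|$, and the separation conditions reduce to conditions on unit vectors of $S^{\dimx-1}$:
\[
\|v_1-v_2\| \ge \frac{3(\sqrt3-1)}{3\sqrt2} = \frac{\sqrt3-1}{\sqrt2}, \qquad \|v_1-v_2\| < 2 .
\]
The upper inequality holds automatically as long as we never include an antipodal pair $v_2=-v_1$. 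I would enforce this by restricting to an open hemisphere, or by keeping only one vector from each antipodal pair, which costs at most a factor $2$. Under the first condition the packing of $\calC_\zeta^*$ (required separation $> 3(\sqrt3-1)\varepsilon$, or $\ge$ as stated) is exactly a packing of the sphere.

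\textbf{Cardinality (main step).} I need a set of unit vectors with pairwise distance at least $(\sqrt3-1)/\sqrt2 \approx 0.5176$ and size at least $(2/\sqrt3)^{\dimx(1+o(1))}$. Two routes are available.
\begin{itemize}
\item Volumetric (Gilbert--Varshamov) argument: a maximal $r$-separated set on the sphere is an $r$-net, so the caps of radius $r$ cover $S^{\dimx-1}$. The cap measure is $\approx (r\sqrt{1-r^2/4})^{\dimx}$ up to polynomial factors. Here $r^2 = (\sqrt3-1)^2/2 = 2-\sqrt3$, and $r\sqrt{1-r^2/4}$ equals $\sin\theta$ where $\theta$ is the angle with chord $r$, namely $\theta = 30^\circ$. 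So caps of angular radius $30^\circ$ have measure roughly $(1/2)^{\dimx}$, giving a packing of size at least about $2^{\dimx(1+o(1))}$, which is larger than $(2/\sqrt3)^{\dimx}$.
\item Direct approach: follow \cite[Lemma~2]{Jedra2023}, which builds packings from a random or combinatorial construction with exactly this rate.
\end{itemize}
I would cite \cite[Lemma~2, Proposition~3]{Jedra2023} as the primary justification, and keep the volumetric bound as a self-contained fallback since it gives a stronger rate. Halving for antipodal pairs only changes the $o(1)$ term. The main obstacle is getting the constants in the cap-measure estimate right and making sure the strict upper bound $<6\sqrt2\varepsilon$ is compatible with the construction.
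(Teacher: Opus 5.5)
Your proposal is correct and follows the paper's route. The paper gives no argument beyond saying the proposition is a modified version of \cite[Lemma~2, Proposition~3]{Jedra2023}. Your rank-one computation is exactly the content of that modification: $A'-A = 3\sqrt2\varepsilon\, v u^\top$, which gives the trace identity, $\|A'-A\|=\|A'-A\|_{\rm F}=3\sqrt2\varepsilon$, and $\|A_1-A_2\|=3\sqrt2\varepsilon\|v_1-v_2\|$. Citing the packing lemma for the cardinality is likewise what the paper does.

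Your volumetric fallback is a genuinely different, self-contained way to get the cardinality, and it is sound. A maximal set of unit vectors with chordal separation at least $(\sqrt3-1)/\sqrt2$, i.e.\ angle $30^\circ$, is covered by the open caps of angular radius $30^\circ$ around its points. Its size is therefore at least the reciprocal of one cap's measure. Discarding one vector from each antipodal pair costs only a factor $2$ and secures the strict bound $\|A_1-A_2\|<6\sqrt2\varepsilon$.

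The comparison with the citation is instructive. The rate $(2/\sqrt3)^{\dimx}$ in the statement is the classical Shannon--Wyner rate for spherical codes with minimal angle $60^\circ$ (chord length $1$), i.e.\ a packing better separated than this proposition needs. Your argument uses only the $30^\circ$ separation actually required and yields the stronger rate $2^{\dimx(1+o(1))}$, at the price of supplying a cap estimate yourself.

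The obstacle you flag about constants is not a real one, because there is a lot of slack. Even the crude bound $\sigma(\{x\in S^{\dimx-1}:\langle x,e\rangle\ge t\})\le e^{-\dimx t^2/2}$ at $t=\cos 30^\circ=\sqrt3/2$ gives a packing of size at least $e^{3\dimx/8}/2$. Since $e^{3/8}>2/\sqrt3$, this already implies the stated bound $|\calP|\ge(2/\sqrt3)^{\dimx(1+o(1))}$.
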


Now, we are ready to prove Theorem~\ref{thm:dim_lower_bound}.
\paragraph*{Proof of Theorem~\ref{thm:dim_lower_bound}}
    Let $ \calP = \{A_1,\ldots,A_{|\calP|} \} \subseteq \calC_{(3\sqrt{2}-2)\varepsilon}^* $ be the packing given in Proposition~\ref{prop:packing}. For any $ i \in \{1,\ldots,|\calP| \} $, define the event $ \calE_i := \{ \|\what{A}_{\tau_A} - A_i \| \le \varepsilon  \} $.
    Then, the proof of \cite[Theorem~2]{Jedra2023} shows that, for any $ \delta \in (0,1/2) $,
    \begin{equation*}
        {\rm kl} \! \left( \frac{1}{|\calP|} \sum_{i=1}^{|\calP|} \bbP_{A_i} (\calE_i) , \frac{\bbP_A (\cup_{i=1}^{|\calP|} \calE_i)}{|\calP|}  \right) \ge \log \! \left( \frac{|\calP|}{4\delta} \right) .
    \end{equation*}
    By combining this with Proposition~\ref{prop:bound_for_lower_bound}, we obtain
    \begin{equation}
        \frac{1}{2 |\calP|} \sum_{i=1}^{|\calP|} \trace \left( (A_i - A)^\top (A_i - A) G_t(A_i) \right) \ge \log \left( \frac{|\calP|}{4\delta} \right) . \label{eq:bound_dim1}
    \end{equation}

    Note that the fact that $ \| \Gamma_\infty (A) \| \ge 1 $ and the assumption $ \varepsilon \le \| \Gamma_\infty (A) \|^{-3} /(12\sqrt{2}) $ mean $ \varepsilon \le \| \Gamma_\infty (A) \|^{-3/2} /(12\sqrt{2}) $. Then, by Proposition~\ref{prop:packing}, the perturbation $ \Delta = A_i - A $ satisfies $ 3\sqrt{2}\varepsilon = \| \Delta \| \le \| \Gamma_\infty (A) \|^{-3/2} / 4 $, and Lemma~\ref{lem:perturbed_gramian} with $ \Delta = A_i - A $ yields
    \begin{align}
        &\frac{1}{|\calP|} \sum_{i=1}^{|\calP|} \trace \! \left( (A_i - A)^\top (A_i - A) G_t(A_i) \right) \nonumber\\
        &\quad - \frac{1}{|\calP|} \sum_{i=1}^{|\calP|} \trace \! \left( (A_i - A)^\top (A_i - A) G_t(A) \right) \nonumber\\
        &\le \frac{1}{|\calP|} \sum_{i=1}^{|\calP|} \|A_i - A\|_{\rm F}^2 \left\| G_t (A_i) - G_t (A)   \right\| \nonumber\\
        &\le \frac{1}{|\calP|} \sum_{i=1}^{|\calP|} \|A_i - A\|_{\rm F}^2  (t-1) 32 (1 + \|B\|^2 \bar{u}) \| A_i - A \| \nonumber\\
        &\quad \times \| \Gamma_\infty (A) \|^3 . \nonumber
    \end{align}
    Moreover, by using Proposition~\ref{prop:packing} again, we obtain
    \begin{align}
        &\frac{1}{|\calP|} \sum_{i=1}^{|\calP|} \trace \! \left( (A_i - A)^\top (A_i - A) G_t(A_i) \right) \nonumber\\
        &\le 18 \varepsilon^2 \lambda_{\min}\! \left( G_t (A)  \right) \nonumber\\
        &\quad + 32 \cdot 54\sqrt{2} \varepsilon^3 (t-1) (1+ \| B\|^2 \bar{u}) \| \Gamma_\infty (A) \|^3 . \nonumber
    \end{align}
    By the assumption $ \varepsilon \le \| \Gamma_\infty (A) \|^{-3} /(12\sqrt{2}) $,
    \begin{align*}
        &\frac{1}{|\calP|} \sum_{i=1}^{|\calP|} \trace \! \left( (A_i - A)^\top (A_i - A) G_t(A_i) \right) \\
        &\le 18 \varepsilon^2 \lambda_{\min} \! \left( G_t (A)  \right) + 32 \cdot \frac{9}{2} \varepsilon^2 (t-1) (1+ \| B\|^2 \bar{u})  \\
        &\le 162 (1+ \| B\|^2 \bar{u}) \varepsilon^2 \lambda_{\min} \! \left( G_t (A)  \right) .
    \end{align*}
    Thus, it follows from \eqref{eq:bound_dim1} that
    \begin{align}
        \lambda_{\min} \! \left( G_t (A)  \right) &\ge \frac{1}{81 (1+ \| B\|^2 \bar{u}) \varepsilon^2} \log \! \left( \frac{|\calP|}{4\delta} \right) \nonumber\\
        &\ge \frac{1}{81 (1+ \| B\|^2 \bar{u}) \varepsilon^2} \nonumber\\
        &\hspace{-0.2cm}\times \left( \log \! \left( \frac{1}{4\delta} \right) + \dimx (1+o(1))  \log \! \left( \frac{2}{\sqrt{3}}  \right) \right) , \nonumber
    \end{align}
    which completes the proof.

\section{Proof of Corollaries~\ref{Cor:lower_bound_infty} and \ref{Cor:lower_bound_infty_dim}}\label{app:lower_bound_infty}
Under any centered control policy $ \{ \pi_s \} $ with $ U_s := \bbE[u_s u_s^\top] $, it holds that
\begin{align*}
\sum_{s=1}^{\tau-1} \Sigma_s &= \sum_{s=0}^{\tau-2} \biggl( (\tau -1 -s) A^s (A^s)^\top \\
&\quad + \sum_{k=0}^{\tau - 2-s} A^{k} B U_s B^\top (A^{k})^\top \biggr).
\end{align*} 
Hence we have: for any $ \{U_s\} $,
\begin{align*}
\sum_{s=1}^{\tau-1} \Sigma_s &\preceq \sum_{s=0}^{\tau-2} \left( (\tau -1 -s) A^s (A^s)^\top + \Xi_\infty (A,U_s) \right) \\
&\preceq (\tau -1) \Gamma_\infty(A) + \sum_{s=0}^{\tau-2} \Xi_\infty (A,U_s) .
\end{align*}
We deduce that: 
\begin{align}
   &\max_{\trace(U_s) \le \bar{u}, \forall s} \lambda_{\min} \! \left(\sum_{s=1}^{\tau-1} \Sigma_s \right) \nonumber\\
   &\le \max_{\trace(U_s) \le \bar{u},  \forall s} \lambda_{\min} \! \left((\tau -1) \Gamma_\infty(A) + \sum_{s=0}^{\tau-2} \Xi_\infty (A,U_s) \right) \nonumber\\
   &= (\tau -1 ) \max_{\trace(U) \le \bar{u}} \lambda_{\min} \! \left(\Gamma_\infty(A) +  \Xi_\infty (A,U) \right) .
\end{align}
Combining this result with Proposition~\ref{prop:lower_bound_persis} and Theorem~\ref{thm:dim_lower_bound}, we obtain the desired results.

\section{Proof of Proposition~\ref{prop:LSE_performance_persis}}\label{app:LSE_performance}
Define $ Q_U := BUB^\top + I $ and
\begin{align}
    \Gamma_{U} &:= 
    \begin{bmatrix}
        Q_U^{1/2} & 0 & \cdots & 0 \\
        AQ_U^{1/2} & Q_U^{1/2} & \ddots & 0 \\
        \vdots & AQ_U^{1/2} & \ddots & 0 \\
        A^{t-2}Q_U^{1/2} & \cdots & AQ_U^{1/2} & Q_U^{1/2} 
    \end{bmatrix}  \nonumber\\
    M_{U} &:= \! \left( \sum_{s=1}^{t-1} \Gamma_s(A) + \sum_{s=1}^{t-1} \Xi_s (A,U)  \right)^{-1/2} , \nonumber\\
    \xi &:= \left[ \eta_0^\top , \eta_1^\top,\ldots, \eta_{t-2}^\top \right]^\top , \nonumber
\end{align}
where $ \eta_t := Q_U^{-1/2} (Bu_t + w_t) $ for $ t\ge 0 $.
As can be seen from \eqref{eq:error_closed_persis}, $ X^\top X $ is closely related to the performance of the LSE.
The following concentration inequality is useful for deriving a sample complexity upper bound of the LSE in terms of $ M_{U} $.

\begin{lemma}\label{lem:isometry_persis}
    Suppose that the assumptions in Proposition~\ref{prop:LSE_performance_persis} hold.
    For $ X := [x_1,x_2,\ldots,x_{t-1}]^\top $ and $ M_{U} $, it holds that for any $ \varepsilon > 0 $,
    \begin{equation*}
        \left\| (XM_{U})^\top XM_{U}  - I \right\| > \max\{\varepsilon, \varepsilon^2 \} K^2
    \end{equation*}
    with probability at most
    \begin{equation*}
        2\exp\! \left( - c_1 \varepsilon^2 \frac{1}{\|M_{U}\|^2 \|\Gamma_{U} \|^2} + c_2 \dimx \right)
    \end{equation*}
    for some positive universal constants $ c_1, c_2 $.
    \hfill $ \diamondsuit $
\end{lemma}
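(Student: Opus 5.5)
We will prove Lemma~\ref{lem:isometry_persis} by writing $X M_U$ as a fixed linear image of the noise vector $\xi$ and then applying a Hanson--Wright-type concentration bound for quadratic forms of sub-Gaussian vectors with independent components, combined with an $\varepsilon$-net argument over the unit sphere of $\bbR^{\dimx}$.

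\textbf{Linear representation.} Since $x_0=0$ and $Bu_s+w_s = Q_U^{1/2}\eta_s$, we have $x_s = \sum_{k=0}^{s-1} A^{s-1-k} Q_U^{1/2}\eta_k$. Stacking gives $\mathrm{vec}(X^\top)=[x_1^\top,\ldots,x_{t-1}^\top]^\top = \Gamma_U \xi$, with $\Gamma_U$ the block lower-triangular Toeplitz matrix defined above. The components of $\xi$ are independent: the $\eta_s$ are mutually independent because the $u_s$ and $w_s$ are, and each $\eta_s$ has independent components with $\psi_2$-norm at most $K$ by Assumption~\ref{ass:subgauss}. They are also centered with identity covariance, since $\bbE[\eta_s\eta_s^\top]=Q_U^{-1/2}(BUB^\top+I)Q_U^{-1/2}=I$. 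Consequently $\bbE[X^\top X]=\sum_{s=1}^{t-1}\Sigma_s = \sum_{s=1}^{t-1}(\Gamma_s(A)+\Xi_s(A,U)) = M_U^{-2}$, so $\bbE[(XM_U)^\top XM_U]=I$.

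\textbf{Fixed direction.} For a unit vector $v\in\bbR^{\dimx}$,
\[
\|XM_Uv\|^2=\sum_s (v^\top M_U x_s)^2=\|(I_{t-1}\otimes v^\top M_U)\Gamma_U\xi\|^2=\xi^\top R_v^\top R_v\xi ,
\]
where $R_v:=(I_{t-1}\otimes v^\top M_U)\Gamma_U$. This is a quadratic form with mean $\|R_v\|_{\rm F}^2=1$. We have $\|R_v\|\le \|M_U\|\|\Gamma_U\|$, and hence $\|R_v^\top R_v\|_{\rm F}^2\le \|R_v\|^2\|R_v\|_{\rm F}^2\le \|M_U\|^2\|\Gamma_U\|^2$. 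The Hanson--Wright inequality then gives
\[
\bbP\bigl(|\|XM_Uv\|^2-1|>\varepsilon' K^2\bigr)\le 2\exp\!\bigl(-c\min\{\varepsilon'^2,\varepsilon'\}/(\|M_U\|^2\|\Gamma_U\|^2)\bigr).
\]
Taking $\varepsilon'=\max\{\varepsilon,\varepsilon^2\}$ makes $\min\{\varepsilon'^2,\varepsilon'\}\ge\varepsilon^2$. This substitution is exactly what produces the $\max\{\varepsilon,\varepsilon^2\}$ in the statement. Handling the $K^2$ normalization needs some care. Our first choice is to use Hanson--Wright in the form $\bbP(|\xi^\top S\xi-\bbE\xi^\top S\xi|>u)\le 2\exp(-c\min\{u^2/(K^4\|S\|_{\rm F}^2),u/(K^2\|S\|)\})$ and set $u=\varepsilon' K^2$.

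\textbf{Net argument.} Let $N$ be a $1/4$-net of $S^{\dimx-1}$ with $|N|\le 9^{\dimx}$. The standard lemma gives $\|D\|\le 2\max_{v\in N}|v^\top Dv|$ for the symmetric matrix $D=(XM_U)^\top XM_U-I$. We apply the fixed-direction bound at threshold $\varepsilon'K^2/2$ and take a union bound over $N$. This yields the claimed probability $2\exp(-c_1\varepsilon^2/(\|M_U\|^2\|\Gamma_U\|^2)+c_2\dimx)$, where the factor $1/2$ and the constant $\log 9$ are absorbed into $c_1$ and $c_2$. Halving the threshold can change $\min\{\cdot,\cdot\}$ by at most a constant factor, which is also absorbed into $c_1$.

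\textbf{Main obstacle.} The delicate step is the Hanson--Wright application. We must verify that independence of the $\xi$ components really holds, which rests on the mutual independence of the $u_t$ assumed in Proposition~\ref{prop:LSE_performance_persis}. We must also bound the operator and Frobenius norms of $R_v^\top R_v$ so that the single quantity $\|M_U\|^2\|\Gamma_U\|^2$ controls both terms of the exponent. For the latter we use $\|R_v\|_{\rm F}^2=1$ to reduce the Frobenius norm to the operator norm.
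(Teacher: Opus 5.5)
Your proposal is correct and follows essentially the same route as the paper. The paper writes $\mathrm{vec}(X^\top)=\Gamma_U\xi$, notes that $\|\sigma_{Mv}^\top\Gamma_U\|_{\rm F}^2=1$ for unit $v$ (this is your $\|R_v\|_{\rm F}^2=1$, since $R_v=\sigma_{Mv}^\top\Gamma_U$), and then invokes the Hanson--Wright plus net argument of \cite[Lemma~2]{Jedra2020}; your norm bookkeeping, the substitution $\varepsilon'=\max\{\varepsilon,\varepsilon^2\}$, and the $1/4$-net union bound just spell out the details the paper delegates to that reference.
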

\begin{proof}
    Noting that $ {\rm vec}(X^\top) = \Gamma_{U} \xi  $ and $ \|\sigma_{Mv}^\top \Gamma_{U} \|_{\rm F}^2 = \trace(\Gamma_{U}^\top \sigma_{Mv} \sigma_{Mv}^\top \Gamma_{U}) = 1  $ for any $ v \in S^{\dimx -1} $, where $ \sigma_{Mv} := \diag (M_{U} v,M_{U}v,\ldots,M_{U}v) \in \bbR^{(t-1)\dimx \times (t-1)} $, by the same argument as in \cite[Lemma~2]{Jedra2020}, we obtain the desired result.
\end{proof}

Proposition~\ref{prop:LSE_performance_persis} follows from the same argument as in the proof of \cite[Theorem~1]{Jedra2020}, provided that \cite[Lemma~1]{Jedra2020} is replaced by Lemma~\ref{lem:isometry_persis}.

\section{Proof of Proposition~\ref{prop:perturbation_infty_persis_modi}}\label{app:perturbation_modi}
By Lemma~\ref{lem:perturbed_gramian} and the assumption that $ A $ is stable and $ \| \Delta \| \le \| \Gamma_\infty(A) \|^{-3/2} /4 $, for any $ U \succeq 0 $ such that $ \trace(U) \le \bar{u} $, we have
    \begin{align}
        &\left\| \Gamma_\infty (A) + \Xi_\infty (A,U) - \left(\Gamma_\infty (A + \Delta) + \Xi_\infty (A + \Delta,U)\right) \right\| \nonumber\\
        &\le 32 (1 + \| B\|^2 \bar{u}) \| \Delta \| \| \Gamma_\infty(A) \|^3 . \nonumber
    \end{align}
Therefore, for any $ U \succeq 0 $ such that $ \trace(U) \le \bar{u} $, we obtain
    \begin{align}
        &J_{A} (U) - J_{A + \Delta} (U) \nonumber\\
        &= \lambda_{\min} \! \left( \Gamma_\infty (A) + \Xi_\infty (A,U) \right) \nonumber\\
        &\quad - \lambda_{\min} \! \left( \Gamma_\infty (A+\Delta) + \Xi_\infty (A+\Delta,U) \right) \nonumber\\
        &\le \| \Gamma_\infty (A) + \Xi_\infty (A,U) \nonumber\\
        &\quad - \left(\Gamma_\infty (A+\Delta) + \Xi_\infty (A+\Delta,U)\right)  \|\nonumber\\
        &\le 32 (1 + \| B\|^2 \bar{u}) \| \Delta \| \| \Gamma_\infty(A) \|^3, \nonumber
    \end{align}
    where we used Weyl's inequality for symmetric matrices.
    Lastly, by the optimality of $ U^*(A+\Delta) $ for $ J_{A+\Delta} $, it holds that $ J_{A+\Delta} (U^*(A)) - J_{A+\Delta} (U^*(A+\Delta)) \le 0 $ and
    \begin{align}
        &J_{A} (U^* (A)) -  J_{A} (U^* (A+\Delta)) \nonumber\\
        &= [J_{A} (U^* (A)) - J_{A+\Delta} (U^*(A))] \nonumber\\
        &\quad + [J_{A+\Delta} (U^*(A)) - J_{A+\Delta} (U^*(A+\Delta))] \nonumber\\
        &\quad +  [J_{A+\Delta} (U^*(A+\Delta)) -  J_{A} (U^* (A+\Delta)) ] \nonumber\\
        &\le [J_{A} (U^* (A)) - J_{A+\Delta} (U^*(A))] \nonumber\\
        &\quad +  [J_{A+\Delta} (U^*(A+\Delta)) -  J_{A} (U^* (A+\Delta)) ] \nonumber\\
        &\le 64 (1 + \| B\|^2 \bar{u}) \| \Delta \| \| \Gamma_\infty(A) \|^3 , \nonumber
    \end{align}
which completes the proof.

\section{Proof of Theorem~\ref{thm:upper_bound}}\label{app:upper_bound}

We bound the difference between $ (t- 1) \lambda_{\min} (\Gamma_\infty (A) + \Xi_\infty (A,U^* (A))) $ and $ \lambda_{\min} \! \left(\sum_{s=1}^{t-t_0 -1} ( \Gamma_s(A) + \Xi_s (A,U^*(\bar{A}_{t_0})) ) \right) $ step by step and employ Proposition~\ref{prop:LSE_performance_persis} to obtain the sample complexity upper bound~\eqref{eq:upper_bound_algo}.
Let $ \varepsilon_{t_0} > 0 $ be the minimum $ \varepsilon $ such that \eqref{eq:upper_bound_persis} with $ t = t_0 $, $ U = (\bar{u}/\dimx) I $, and $ \delta \leftarrow \delta /2 $ holds.
Then, by Proposition~\ref{prop:LSE_performance_persis} and Lemma~\ref{lem:perturbed_gramian}, $ \| \what{A}_{t_0} - A \| \le \varepsilon_{t_0} $ holds and $ \what{A}_{t_0} $ is stable (i.e., $ \bar{A}_{t_0} = \what{A}_{t_0} $) with probability at least $ 1 - \delta/2 $.
Since by assumption, $ \Delta := \bar{A}_{t_0} - A $ satisfies $ \bbP(\| \Delta \|  \le \| \Gamma_\infty(A) \|^{-3/2} /4 ) \ge 1 - \delta/2 $, by Proposition~\ref{prop:perturbation_infty_persis_modi}, with probability at least $ 1- \delta/2 $, $ U_{t_0} := U^* (\bar{A}_{t_0}) $ satisfies
\begin{align}
   &(t-1) \lambda_{\min} (\Gamma_\infty (A) + \Xi_\infty (A,U^* (A))) \nonumber\\
   &\quad - (t-1) \lambda_{\min} (\Gamma_\infty (A) + \Xi_\infty (A,U_{t_0})) \nonumber\\
   &\le  64 (1 + \| B\|^2 \bar{u})  \| \Gamma_\infty(A) \|^3 (t-1) \varepsilon_{t_0}. \label{eq:bound_step1_modi}
\end{align}

In addition, the difference between the minimum eigenvalues of the infinite and finite time Gramians is bounded as follows:
\begin{align}
   &(t-1) \lambda_{\min} (\Gamma_\infty (A) + \Xi_\infty (A,U_{t_0})) \nonumber\\
   &\quad - (t-1) \lambda_{\min} \! \left( \sum_{s=0}^{t-2} A^s (A^s)^\top + \sum_{s=0}^{t-2} A^s B U_{t_0} B^\top (A^s)^\top  \right) \nonumber\\
   &\le (t-1) \left\| \sum_{s=t-1}^\infty A^s (A^s)^\top + \sum_{s=t-1}^\infty A^s B U_{t_0} B^\top (A^s)^\top   \right\| \nonumber\\
   &\le \frac{(1+\|B\|^2 \bar{u}) (t-1)\gamma^{t-1}}{(1-\gamma)^2}, \label{eq:bound_step2_modi}
\end{align}
where we used the fact that for $ \gamma :=  1 - \| \Gamma_\infty (A) \|^{-1} \in (0,1) $ and for any $ k \ge 1 $,
\begin{equation*}
   \| A^k \| \le \frac{\gamma^{k/2}}{(1-\gamma)^{1/2}}.
\end{equation*}

Moreover, noting that $ \sum_{s=1}^{t-1} (\Gamma_s(A) + \Xi_s(A,U_{t_0}) ) = \sum_{s=0}^{t-2}  (t-1-s) (A^s (A^s)^\top + A^s B U_{t_0} B^\top (A^s)^\top ) $, we have
\begin{align}
   &(t-1) \lambda_{\min} \! \left( \sum_{s=0}^{t-2} A^s (A^s)^\top + \sum_{s=0}^{t-2} A^s B U_{t_0} B^\top (A^s)^\top  \right)  \nonumber\\
   &\quad - \lambda_{\min} \! \left( \sum_{s=1}^{t-1} (\Gamma_s(A) + \Xi_s(A,U_{t_0}) ) \right) \nonumber\\
   &\le \left\| \sum_{s=0}^{t-2} s \! \left(A^s (A^s)^\top + A^s B U_{t_0} B^\top (A^s)^\top \right)  \right\| \nonumber\\
   &\le \frac{ (1 + \|B\|^2 \bar{u}) \gamma }{(1- \gamma)^3} \nonumber\\
   &= (1 + \|B\|^2 \bar{u}) \|\Gamma_\infty(A) \|^3 (1- \|\Gamma_\infty(A) \|^{-1}). \label{eq:bound_step3_modi}
\end{align}

Furthermore, we have
\begin{align}
   & \lambda_{\min} \! \left( \sum_{s=1}^{t-1} (\Gamma_s(A) + \Xi_s(A,U_{t_0}) ) \right)  \nonumber\\
   &\quad - \lambda_{\min} \! \left( \sum_{s=1}^{t-t_0 - 1} (\Gamma_s(A) + \Xi_s(A,U_{t_0}) ) \right) \nonumber\\
   & \le \left\|  \sum_{s=t - t_0}^{t-1} (\Gamma_s (A) + \Xi_s (A,U_{t_0}) ) \right\| \nonumber \\
   & \le (1 + \| B\|^2 \bar{u}) \sum_{s=t-t_0}^{t-1} \sum_{k=0}^{s-1} \| A^k \|^2 \nonumber\\
   &\le \frac{(1 + \| B\|^2 \bar{u})t_0}{(1-\gamma)^2} = (1 + \|B\|^2 \bar{u}) \|\Gamma_\infty (A) \|^2 t_0 . \label{eq:bound_step4_modi}
\end{align}

By \eqref{eq:bound_step1_modi}--\eqref{eq:bound_step4_modi}, we arrive at
\begin{align}
   &(t-1) \lambda_{\min} (\Gamma_\infty (A) + \Xi_\infty (A,U^* (A))) \nonumber\\
   &\quad - \lambda_{\min} \! \left( \sum_{s=1}^{t-t_0 - 1} ( \Gamma_s(A) + \Xi_s(A,U_{t_0}) ) \right) \nonumber\\
   &\le \biggl( C_1  (t-1)\varepsilon_{t_0} +  C_2  (t-1) (1-\|\Gamma_\infty(A) \|^{-1})^{t-1} \nonumber\\
   &\quad + C_2 (\|\Gamma_\infty(A) \| - 1)  + C_2 t_0  \biggr) \| \Gamma_\infty(A) \|^2 , \nonumber
\end{align}
where $ C_1 := 64 (1 + \| B\|^2 \bar{u})  \| \Gamma_\infty(A) \| $ and $ C_2 := 1+\|B\|^2 \bar{u}  $.

Therefore, if $ \| \bar{A}_{t_0} - A \| \le \varepsilon_{t_0} $ and
\begin{align}
   &(t-1) \lambda_{\min} (\Gamma_\infty (A) + \Xi_\infty (A,U^* (A))) \nonumber\\
   &\ge C \max \left\{ \frac{1}{\varepsilon^2} , \|\Gamma_{U_{t_0}} \|^2 \right\} \left( \log \left( \frac{2}{\delta}\right) + \dimx  \right) \nonumber\\
   &\quad +  \Bigl( C_1  (t-1)\varepsilon_{t_0} +  C_2 (t-1) (1-\|\Gamma_\infty(A) \|^{-1})^{t-1} \nonumber\\
   &\quad + C_2 (\|\Gamma_\infty(A)\| - 1) + C_2 t_0 \Bigr)\| \Gamma_\infty(A) \|^2, \nonumber
\end{align}
then it holds that
\begin{align}
   &\lambda_{\min} \! \left( \sum_{s=1}^{t-t_0 - 1} \left( \Gamma_s(A) + \Xi_s (A, U_{t_0}) \right) \right)  \nonumber\\
   &\ge C \max \! \left\{ \frac{1}{\varepsilon^2} , \|\Gamma_{U_{t_0}} \|^2 \right\} \left( \log \! \left( \frac{2}{\delta}\right) + \dimx  \right), \nonumber
\end{align}
and it follows from Proposition~\ref{prop:LSE_performance_persis} that $ \bbP\! \left( \| \what{A}_t - A \| \le \varepsilon  \  \middle| \ \| \bar{A}_{t_0} - A \| \le \varepsilon_{t_0}  \right) \ge 1 -\delta/2 $.
Since $ \bbP ( \|  \bar{A}_{t_0} - A \| \le \varepsilon_{t_0} ) \ge  1- \delta/2  $, it holds that $ \bbP( \| \what{A}_t - A \| \le \varepsilon  ) \ge 1 -\delta $.

For $ t \in \{0,\ldots,t_0-1\} $, we have $ \bbE[\|u_t\|^2] = \trace ( (\bar{u}/\dimx) I ) = \bar{u} $, and for $ t \ge t_0 $, it holds that
\begin{equation*}
    \bbE[\|u_t\|^2] = \trace (\bbE[U_{t_0}]) \le \bar{u} ,
\end{equation*}
which concludes the proof.


\end{document}